%
%
%
%
%
%
%
%
\RequirePackage{fix-cm}
\documentclass[11pt,reqno]{amsart}
\usepackage{amsmath,amssymb,amsthm}
\usepackage{amsfonts}
\usepackage{mathtools}
\usepackage{enumitem}


\usepackage{color}
\usepackage[pdftex,breaklinks]{hyperref}
\hypersetup{colorlinks=true,linkcolor=blue,citecolor=red}




\numberwithin{equation}{section}

\theoremstyle{plain}
\newtheorem{theorem}{Theorem}[section]
\newtheorem{lemma}[theorem]{Lemma}

\newtheorem{proposition}[theorem]{Proposition}
\newtheorem{corollary}[theorem]{Corollary}

\newtheorem*{theorem-non}{Theorem}

\theoremstyle{definition}
\newtheorem{definition}[theorem]{Definition}

\newtheorem{remark}[theorem]{Remark}

\theoremstyle{definition}
\newtheorem{hypothesis}[theorem]{Hypothesis}

\theoremstyle{definition}

\newcommand{\be}{\begin{equation}}
\newcommand{\ee}{\end{equation}}
\newcommand{\n}{\noindent}

\DeclareMathOperator{\loc}{loc}
\DeclareMathOperator{\SO}{SO}
\DeclareMathOperator{\BMO}{BMO}
\DeclareMathOperator{\dist}{distance}

\DeclareMathOperator{\Div}{Div}
\DeclareMathOperator{\tr}{trace}

\DeclareMathOperator{\Var}{Var}

\DeclareMathOperator{\Lin}{Lin}
\DeclareMathOperator{\Psymn}{Psym_\mathit{n}}
\DeclareMathOperator{\Symn}{Sym_\mathit{n}}
\DeclareMathOperator{\Def}{Def}
\DeclareMathOperator{\AD}{AD}

\newcommand{\tsn}[1]{{[\kern-0.3ex] #1 [\kern-0.3ex]}}

\newcommand{\Ksubset}{{\subset\!\subset}}

\newcommand{\oo}{_{\mathrm o}}

\newcommand{\grad}{\nabla}

\newcommand{\id}{{\mathbf{id}}}

\newcommand{\ku}{{\tau_u}}
\newcommand{\kv}{{\tau_v}}

\newcommand{\bee}{{\mathbf e}}

\newcommand{\bL}{{\mathbf L}}
\newcommand{\bB}{{\mathbf B}}
\newcommand{\bD}{{\mathbf P}}
\newcommand{\bC}{{\mathbf C}}

\newcommand{\bV}{{\mathbf V}}
\newcommand{\bT}{{\mathbf T}}
\newcommand{\bU}{{\mathbf U}}

\newcommand{\bE}{{\mathbf E}}
\newcommand{\bH}{{\mathbf H}}

\newcommand{\bI}{{\mathbf I}}
\newcommand{\bF}{{\mathbf F}}

\newcommand{\bK}{{\mathbf K}}
\newcommand{\bu}{{\mathbf u}}

\newcommand{\bv}{{\mathbf v}}
\newcommand{\bx}{{\mathbf x}}
\newcommand{\bd}{{\mathbf d}}
\newcommand{\by}{{\mathbf y}}
\newcommand{\bz}{{\mathbf z}}
\newcommand{\bn}{{\mathbf n}}

\newcommand{\bw}{{\mathbf w}}
\newcommand{\bQ}{{\mathbf Q}}
\newcommand{\bS}{{\mathbf S}}
\newcommand{\bs}{{\mathbf s}}
\newcommand{\ba}{{\mathbf a}}
\newcommand{\bb}{{\mathbf b}}

\newcommand{\bR}{{\mathbf R}}
\newcommand{\bM}{{\mathbf M}}

\newcommand{\bc}{{\mathbf c}}

\newcommand{\bm}{{\mathbf m}}

\newcommand{\sB}{{\mathcal B}}
\newcommand{\sS}{{\mathcal S}}
\newcommand{\sD}{{\mathcal D}}

\newcommand{\sH}{{\mathcal H}}
\newcommand{\sK}{{\mathcal K}}

\newcommand{\sN}{{\mathcal N}}

\newcommand{\mi}{{\text{-}}}

\newcommand{\CC}{{\mathbb C}}

\newcommand{\R}{{\mathbb R}}
\newcommand{\A}{{\mathbb A}}

\newcommand{\Mn}{{\mathbb M}^{n\times n}}

\newcommand{\Mnp}{{\mathbb M}_+^{n\times n}}

\newcommand{\bue}{{{\mathbf u}\e}}

\newcommand{\Om}{{\Omega}}

\newcommand{\Omc}{{\overline{\Omega}}}

\newcommand{\E}{{\mathcal E}}

\newcommand{\e}{_{\mathrm e}}

\newcommand{\dd}{{\mathrm d}}
\newcommand{\DD}{{{\mathrm D}}}

\newcommand{\rmT}{{\mathrm T}}

\newcommand{\altd}{{\delta}}

\newcommand{\grads}{{\grad}_{\!\mathrm s}}

\def\Xint#1{\mathchoice
   {\XXint\displaystyle\textstyle{#1}}
   {\XXint\textstyle\scriptstyle{#1}}
   {\XXint\scriptstyle\scriptscriptstyle{#1}}
   {\XXint\scriptscriptstyle\scriptscriptstyle{#1}}
   \!\int}
\def\XXint#1#2#3{{\setbox0=\hbox{$#1{#2#3}{\int}$}
     \vcenter{\hbox{$#2#3$}}\kern-.5\wd0}}

\def\dashint{\Xint-}

\newlength{\extramargin}
\setlength{\extramargin}{-2pt}
\setlength{\oddsidemargin}{\extramargin}
\setlength{\evensidemargin}{\extramargin}
\setlength{\textwidth}{6.2in}
\addtolength{\textwidth}{-2\extramargin}
\setlength{\textheight}{9.3in}
\setlength{\topmargin}{\extramargin}
\addtolength{\textheight}{-2\extramargin}
\addtolength{\textheight}{-\headheight}
\addtolength{\textheight}{-\headsep}
\addtolength{\textheight}{-\footskip}
\begin{document}

\title[BMO and Elasticity]
{BMO and Elasticity: Korn's Inequality; Local Uniqueness in Tension}


\author[D. E. Spector]{Daniel E. Spector}
\address{Okinawa Institute of Science and Technology Graduate University,
Nonlinear Analysis Unit, 1919--1 Tancha, Onna-son, Kunigami-gun,
Okinawa, Japan}
\email{daniel.spector@oist.jp}

\thanks{}

\author[S. J. Spector]{Scott J. Spector}
\address{Department of Mathematics,
Southern Illinois University, Carbondale, IL 62901, USA}
\email{sspector@siu.edu}
\thanks{}


\date{3 April 2020}
\dedicatory{}

\keywords{Nonlinear Elasticity, Finite Elasticity, Uniqueness,
Equilibrium Solutions, Korn's Inequality, Bounded Mean Oscillation,
BMO Local Minimizers, Small Strains}
\subjclass[2010]{74B20, 35A02, 74G30, 42B37, 35J57}
%
%



\begin{abstract} In this manuscript two $\BMO$ estimates are
obtained, one for Linear Elasticity and one for Nonlinear Elasticity.
It is first shown that the $\BMO$-seminorm of the gradient of a
vector-valued mapping is bounded above by a constant times the
$\BMO$-seminorm of the
symmetric part of its gradient, that is, a Korn inequality in $\BMO$.
The uniqueness of equilibrium for a
finite deformation whose principal stresses are everywhere
nonnegative is then considered.
It is shown that when the second variation of the energy,
when considered as a function of the strain,
is uniformly positive definite at such an
equilibrium solution, then there is a
$\BMO$-neighborhood in strain space where there are no other
equilibrium solutions.
\end{abstract}


\maketitle
\setlength{\parskip}{.5em}
\setlength{\parindent}{2em}
\baselineskip=15pt


\section{Introduction}\label{sec:Intro}


In 1972 Fritz John~\cite{Jo72} published a uniqueness theorem for Nonlinear
Elasticity that, until recently, was the only result of its kind. He showed
that, given a stress-free reference configuration whose elasticity tensor
is uniformly positive definite, there is an $L^\infty$-neighborhood of the
reference configuration in \emph{the space of strains}, rather than the
space of deformation gradients, in which
there is at most one smooth solution of the equations of equilibrium for the
\emph{pure-displacement problem} for a hyperelastic body.
His proof made use of the space $\BMO$, Bounded Mean Oscillation, a
space that John \& Nirenberg~\cite{JN61} had invented some ten years
earlier in order to better analyze problems in Elasticity.

Subsequently, although our understanding of the space $\BMO$ and its
applicability to systems of partial differential equations have advanced
significantly, the original goal of making use of $\BMO$ in problems of
Elasticity has not progressed.  Recently, the authors~\cite{SS19}
extended John's uniqueness result to include the \emph{mixed problem}.
In particular we showed that, given a smooth
equilibrium solution $\bu\e$  at which the second variation of
the energy is uniformly positive, there are no other equilibrium
solutions $\bv\e$ for which the
difference of the
right Cauchy-Green \emph{strain tensors}:
\be\label{eqn:strain-diff-zero-1}
(\grad\bu\e)^\rmT\grad\bu\e- (\grad\bv\e)^\rmT\grad\bv\e
\ee
\n is small in $L^\infty$.  Here  $\grad\bu$ denotes the deformation
gradient: an $n$ by $n$ matrix of partial derivatives of the components
of the deformation $\bu:\Om\to\R^n$, $(\grad\bu)^\rmT$ denotes the
transpose of $\grad\bu$, and we identify the body with the region
$\Omc\subset\R^n$ that it occupies in a fixed reference configuration.


In this manuscript we extend the results obtained in \cite{Jo72,SS19}.
We note that when $\Om$ has sufficiently smooth boundary
(Lipschitz suffices),
the space $\BMO(\Om)$ is a Banach space that is between $L^\infty$
and all of the other $L^p$-spaces,
that is, for all $p\in[1,\infty)$,
\[
L^\infty(\Om) \subset \BMO(\Om) \subset L^p(\Om).
\]
\n  Specifically,
$\tsn{\cdot}_{\BMO(\Om)}\le 2 \|\cdot\|_{L^\infty(\Om)}$ and hence an
$\varepsilon$-neighborhood in $\BMO$ is potentially much larger than
an $\varepsilon$-neighborhood in $L^\infty$.  Here
$\tsn{\cdot}$ denotes the standard seminorm on $\BMO(\Om)$
(see \eqref{eqn:BMO-V}).

We show, in particular,  that the $L^\infty$-neighborhood in which there is
at most one solution can be enlarged to a neighborhood in $\BMO$ for
both the displacement and the mixed problem \emph{provided}
the equilibrium solution $\bue$ has
nonnegative principal stresses everywhere.   Thus, in this case
the strain difference in \eqref{eqn:strain-diff-zero-1} need no longer
be uniformly small, but instead it need only be small in the space
$\BMO(\Om)$.

There are similar interesting results in the Calculus of
Variations literature.  Kristensen \& Taheri~\cite[Section~6]{KT03} and
Campos Cordero~\cite[Section~4]{Ca17} (see, also, Firoozye~\cite{Fi92})
have shown
that, for the Dirichlet problem, if $\bue$ is a Lipschitz-continuous
solution of the equilibrium equations at which the second variation
is uniformly positive, then
there is a neighborhood of $\grad\bue$ in $\BMO$ in which all
Lipschitz mappings have energy that is greater than or equal to
the energy of $\bue$.
We note that the assumptions in \cite{Ca17}, in particular, are
incompatible with the blowup of the energy as the Jacobian goes to zero.
Recently \cite{SS20-2} we have extended the results in
\cite[Section~4]{Ca17} to include the Neumann and mixed problems.
Although our proofs are not applicable to elasticity,
we have shown that given
a Lipschitz-continuous solution $\bue$ of the equilibrium equations
at which the second variation is uniformly positive, there is a
neighborhood of $\grad\bue$ in $\BMO$ in which all mappings $\bv$ in the
Sobolev space $W^{1,1}(\Om;\R^n)$ with $\grad\bv\in\BMO(\Om)$ have energy
that is \emph{strictly greater} than the energy of $\bue$.


We herein also establish a version of Korn's inequality
for $\BMO$.
It is well-known (see, e.g., \cite{ADM06,GS86,JK17,Ti72}) that,
for all $p\in(1,\infty)$, a generalized Korn inequality is valid,
that is, there is a constant $K=K(p)=K(p,n,\Om)$ such that
\[
\int_\Om \big|\grad\bw(\bx)\big|^p\,\dd\bx
\le
K(p) \int_\Om \big|\grad\bw(\bx)+[\grad\bw(\bx)]^\rmT\big|^p\,\dd\bx
\]
\n for all $\bw\in W^{1,p}(\Om;\R^n)$ that
satisfy a suitable constraint that eliminates infinitesimal
rotations (e.g., $\bw=\mathbf{0}$ on
$\sD\subset\partial\Om$).
We show that there exists
a constant $\sK=\sK(n)$ such that, for every nonempty, bounded
open set $U\subset\R^n$,
\be\label{eqn:BMO-Korn-intro}
\tsn{\grad\bw}_{\BMO(U)}
\le
\sK\tsn{\grad\bw+(\grad\bw)^\rmT}_{\BMO(U)},
\ee
\n  for every
$\bw\in W_{\loc}^{1,1}(U;\R^n)$ with $\grad\bw\in \BMO(U)$.  Note that,
unlike the standard Korn inequalities, which are only valid for John
domains (see \cite{JK17}) and for which the Korn constant depends on
the domain, \eqref{eqn:BMO-Korn-intro} is valid for all bounded
open sets $U$ with a constant that is independent of $U$.  (The lack of
a constraint to eliminate infinitesimal rotations is due to the nature of
the $\BMO$-seminorm.  See \eqref{eqn:BMO-V-norm}  and  \eqref{eqn:Korn2}.)


Before we present a more detailed description of our results,
we note that there is a long history of both nonuniqueness,
e.g., buckling \cite{SS08}, and uniqueness results in
nonlinear elasticity.  Rather than providing details here we
instead refer the reader to the introductions of two recent
papers concerning uniqueness \cite{SS18,SS19}. These papers
also discuss interesting possible extensions of such
results:  the pure-traction problem, incompressible
materials, and
live loading, none of which are considered in this manuscript.

We begin in Section~\ref{sec:prelim} with our notations. In
Section~\ref{sec:RGRSM} we then present certain consequences of the
Geometric Rigidity theory of Friesecke, James, \&  M{\"u}ller~\cite{FJM02}
(see, also, Conti \&  Schweizer~\cite{CS06} and Kohn~\cite{Ko82}) that
are useful in our work.  In
particular, a result of Lorent~\cite{Lo13} as well as a result of Ciarlet
\& Mardare~\cite{CM15} give conditions under which the equality of two
strains, $(\grad\bu)^\rmT\grad\bu\equiv(\grad\bv)^\rmT\grad\bv$,
yields the equality of the underlying deformations: $\bu\equiv\bv$.
(This need not be true without further assumptions, even if $\bu=\bv$ on
$\partial\Om$).


After reviewing certain standard properties of the space
$\BMO$, we then present, in Section~\ref{sec:maximal}, theorems
from Harmonic Analysis that we have found
useful in this work.  Of particular consequence is a result from
\cite{SS19}:  If $\Om$ is a Lipschitz domain and
$1\le p<q<\infty$, then there is a constant $C=C(p,q,\Om)$ such that,
for all $\psi\in\BMO(\Om)$,
\be\label{eqn:interp-into}
\int_\Om |\psi(\bx)|^q\,\dd\bx
\le
C
\big(\tsn{\psi}_{\BMO(\Om)}
+
\big| \langle\psi\rangle_{\Om}  \big|\big)^{q-p}
\int_\Om |\psi(\bx)|^p\,\dd\bx,
\ee
\n where  $\langle\psi\rangle_{\Om}$ denotes
the average value of the function $\psi$ on $\Om$.
This \emph{interpolation inequality} has a number of important
consequences.  Specifically,
we show that it implies that a result that John \& Nirenberg~\cite{JN61}
established for cubes is in fact valid for every nonempty, bounded,
open region $V\subset\R^n$:  For all $q\in(1,\infty)$  there exists a
constant $C=C(q)$ such that
\be\label{eqn:to-get-Korn}
\tsn{\phi}_{\BMO(V)}
\le
\sup_{Q\Ksubset V}\bigg(\,
\dashint_Q |\phi(\bx)-\langle\phi\rangle_Q|^q\,\dd\bx\bigg)^{\!\!1/q}
\le
C(q)\tsn{\phi}_{\BMO(V)},
\ee
\n for all $\phi\in\BMO(V)$, where the
supremum is taken over all cubes $Q$  that are compactly
supported in $V$
and have faces that are parallel to the coordinate planes.
(If $q=1$ the quantity in the center of
inequality \eqref{eqn:to-get-Korn} is equal to the $\BMO$-seminorm
of $\phi$.)
In Section~\ref{sec:Korn} we then make use of \eqref{eqn:to-get-Korn}
together with a version of Korn's inequality due to Diening,
R$\overset{_\circ}{\mathrm{u}}$\v zi\v cka, \& Schumacher~\cite{DRS10}
to establish Korn's inequality in $\BMO$, that is,
\eqref{eqn:BMO-Korn-intro}.


In Section~\ref{sec:NLE} we introduce our hypotheses on a compressible,
nonlinearly
hyperelastic body where the stored-energy density $\sigma$ depends on the
material point $\bx$ and the right Cauchy-Green strain tensor
$\bC_{\bu}(\bx)=[\grad\bu(\bx)]^\rmT\grad\bu(\bx)$.
Thus, in the absence of body forces and surface tractions, the
total energy of a deformation $\bu:\overline{\Omega}\to\R^n$,
which satisfies $\bu=\bd$ on $\sD\subset\partial\Om$, is given by
\[
\E(\bu)=\int_\Om \sigma\big(\bx,\bC_\bu(\bx)\big)\;\!\dd\bx.
\]

The second variation of $\E$ evaluated at a solution of the corresponding
equilibrium equations $\bue$ is then equal to
\[
\begin{aligned}
\delta^2\E(\bue)&[\bw,\bw]
= \int_\Omega \bK\big(\bx,\bC_\bue(\bx)\big):
\big[(\grad\bw)^\rmT\grad\bw\big]\dd\bx\\
&+\tfrac14\int_\Omega \bE(\bx):
\CC\big(\bx,\bC_\bue(\bx)\big)
\big[\bE(\bx)\big]\dd\bx,
\end{aligned}
\]
\n where $\bK=2\frac{\partial}{\partial\bC}\sigma(\bx,\bC)$  denotes
the (second) Piola-Kirchhoff stress tensor,
$\CC=4\frac{\partial^2}{\partial\bC^2}\sigma(\bx,\bC)$ denotes the
elasticity tensor,
$\bE=(\grad\bue)^\rmT\grad\bw+(\grad\bw)^\rmT\grad\bue$, and
$\bw\in W^{1,2}(\Om;\R^n)$ satisfies $\bw=\mathbf{0}$ on
$\sD\subset\partial\Om$.

If $\bK$ is positive semi-definite, equivalently, the principal
stresses are nonnegative, and $\CC$ is uniformly positive definite,
then $\delta^2\E(\bue)$ is uniformly positive.
Standard techniques (see, e.g., the introduction to
\cite{SS19}), which are usually applied in
the space of deformation gradients, make use of Taylor's theorem
to deduce that there is then an
$L^\infty$ neighborhood of $\bC_\bue$ in strain space
(see \eqref{eqn:strain-diff-zero-1}) in which there are
no other solutions of the equilibrium equations.  A refinement
of this argument, which is due to
John~\cite[pp.~624--625]{Jo72} (again, see the introduction to
\cite{SS19}),  makes use of
\eqref{eqn:interp-into} to enlarge the set in which
there are no other solutions to a neighborhood of $\bC_\bue$ in
the space $\BMO$.  We present the details of this argument in
Section~\ref{sec:unique-BMO+L1} of this manuscript.   We also note,
in Section~\ref{sec:Def-SS}, how these results simplify when one
of the two right Cauchy-Green strain tensors
is in an $L^\infty$-neighborhood of the reference
configuration.  Finally, in Section~\ref{sec:RCatE}, we present
further simplifications that occur when the reference configuration
is itself at equilibrium.


\section{Preliminaries}\label{sec:prelim}

For any domain (nonempty, connected, open set)
$U\subset\R^n$, $n\ge2$, we denote by
$L^p(U)$, $p\in[1,\infty)$, the space
of real-valued Lebesgue measurable functions $\psi$
whose $L^p$-norm is finite:
\[
||\psi ||^p_{p,U} := \int_U |\psi(\bx)|^p\,\dd\bx < \infty.
\]
\n $L^\infty(U)$ will denote those Lebesgue measurable
functions whose essential supremum is finite.
$L^1_{\loc}(U)$ will consist of those Lebesgue measurable
functions that are integrable on every compact subset of $U$.
 We shall write $C(U;\R^n)$ for the
set of continuous
functions  $\bu:U\to\R^n$, while $C^1(\overline{U};\R^n)$ will denote
those continuous functions $\bu:\overline{U}\to\R^n$ whose classical
derivative exists on $U$ and
has an extension that is continuous on $\overline{U}$, where
$\overline U$ denotes the closure of $U$.

We shall write $\Om\subset\R^n$, $n\ge2$,
to denote a \emph{Lipschitz domain}, that is, a
bounded domain whose boundary $\partial \Om$ is
(strongly) Lipschitz.
(See, e.g., \cite[p.~127]{EG92}, \cite[p.~72]{Mo66}, or
\cite[Definition~2.5]{HMT07}.)   Essentially, a bounded domain is
Lipschitz if, in a neighborhood of every boundary point,
the boundary is the graph of a
Lipschitz-continuous function and the domain is on ``one side''
of this graph.

For $1\le p\le \infty$,
$W^{1,p}(\Omega;\R^N)$ will denote the usual
Sobolev space of (Lebesgue) measurable
(vector-valued) functions $\bu\in L^p(\Omega;\R^N)$ whose
distributional gradient $\grad\bu$ is also contained in $L^p$.
If $\phi\in W^{1,p}(\Omega):=W^{1,p}(\Omega;\R)$ we shall denote
its  $W^{1,p}$-norm by\footnote{Since $\Om$ is a
Lipschitz domain, every $\phi\in W^{1,\infty}(\Omega)$ has a
representative that is Lipschitz continuous.}
\[
\begin{aligned}
||\phi||_{W^{1,p}(\Omega)}
&:=
\Big(||\phi||^p_{p,\Omega}
+||\grad\phi||^p_{p,\Omega}\Big)^{\!1/p},
\quad
1\le p <\infty,\\[2pt]
||\phi||_{W^{1,\infty}(\Omega)}
&
:= \max\{||\phi||_{\infty,\Omega}, ||\grad\phi||_{\infty,\Omega}\},
\quad
 p =\infty.
\end{aligned}
\]
\n We shall write $W^{1,p}_0(\Omega;\R^N)$ for the subspace of
$\bu\in W^{1,p}(\Omega;\R^N)$ that satisfy $\bu=\mathbf{0}$ on
$\partial \Omega$ (in the sense of trace).  $W_{\loc}^{1,p}(U;\R^N)$
will denote the set of $\bu\in W^{1,p}(V;\R^N)$
for every domain $V\Ksubset U$,
where we write $V\Ksubset U$ provided
that $V\subset K_V \subset U$ for some compact set $K_V$.

We shall write $\Mn$ for the (vector) space of $n$ by $n$ matrices with
real entries.
Given an orthonormal basis $\bee_i$, $i=1,2,\dots,n$, for $\R^n$ we
write $a_i=\ba\cdot\bee_i$ for $\ba\in\R^n$ and
$F_{ij}=\bee_i\cdot\bF\bee_j$ for $\bF\in\Mn$.
The set of symmetric and positive-definite symmetric matrices in $\Mn$
shall be denoted by
\[
\begin{aligned}
\Symn&:=\{\bE\in\Mn: \bE^\rmT=\bE\},\\
\Psymn&:=\{\bE\in\Symn:  \ba\cdot\bE\ba>0 \text{ for all }
\ba\in\R^n \text{ with } \ba\ne\mathbf{0}\},
\end{aligned}
\]
\n respectively, where $\bH^\rmT$ denotes the transpose of $\bH\in\Mn$.
We write $\bH:\bK:=\tr (\bH\bK^\rmT)$ for the inner product
of $\bH,\bK\in\Mn$. The norm of
$\bH\in\Mn$ is then given by $|\bH|:=\sqrt{\bH:\bH\,}$.  We write
\[
\SO(n):=\{\bQ\in\Mn: \bQ^\rmT\bQ=\bQ\bQ^\rmT=\bI,\ \det\bQ=1\}
\]
\n for the group of rotations, where $\bI$ denotes the identity matrix
and $\det\bF$ denotes the determinant of $\bF\in\Mn$.

\subsection{Strains and Geometric Rigidity}\label{sec:RGRSM}


Fix $p\ge1$. Given a mapping $\bu\in W^{1,p}(\Om;\R^n)$ we define
the \emph{right Cauchy-Green strain tensor}
$\bC_\bu\in L^{p/2}(\Om;\Symn)$ corresponding to $\bu$ by
\be\label{eqn:RCGST}
\bC_{\bu}:= (\grad\bu)^\rmT\grad\bu.
\ee
\n This tensor can be used to measure the change in the length of the
image of a curve in $\Om$ after it is deformed by $\bu$ (see, e.g.,
\cite[\S1.8]{Ci1988} or \cite[\S7.2]{GFA10}).


In \cite{FJM02} Friesecke, James, \&  M{\"u}ller (see, also, Conti \&
Schweizer~\cite{CS06}) establish a Geometric-Rigidity result that implies
that the distance (in $L^1$) from  $\bC_{\bu}$ to the identity matrix
yields, up to a multiplicative constant, an upper bound for the distance
(in $L^2$) from $\grad\bu$ to some particular rotation $\bQ_\bu\in\SO(n)$.
We shall make use of two interesting consequences of this result. The
first is a theorem of Lorent~\cite[Theorem~1]{Lo13} that establishes
conditions under which two mappings with the same strain tensor are
related by a rigid deformation:
\begin{proposition}\label{prop:Lo13}
Let
$\bv\in W^{1,1}(\Om;\R^n)$ satisfy $\det\grad\bv>0~a.e.$
Suppose that $\bu\in W^{1,n}(\Om;\R^n)$ satisfies
$\det\grad\bu>0~a.e.$, $\bC_\bu=\bC_\bv~a.e.$, and
\[
|\grad\bu(\bx)|^n\le K(\bx) \det\grad\bu(\bx) \text{ for } a.e.~\bx\in\Om,
\]
\n where\footnote{Lorent shows that, when $n=2$, $K\in L^1(\Om)$ suffices
and, when $n\ge3$, $K\in L^p(\Om)$ with $p>n-1$ suffices.}
$K\in L^n(\Om)$.  Then there exists a rotation $\bR\in\SO(n)$ such that
\[
\grad\bv(\bx) =\bR \grad\bu(\bx) \text{ for } a.e.~\bx\in\Om.
\]
\end{proposition}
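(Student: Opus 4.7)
The plan is to construct a pointwise rotation field $\bR:\Om\to\SO(n)$ intertwining $\grad\bu$ and $\grad\bv$, and then prove that it is a.e.\ equal to a single constant rotation. Since $\det\grad\bu>0$ a.e., one can set
\[
\bR(\bx):=\grad\bv(\bx)\,[\grad\bu(\bx)]^{-1}.
\]
Using $\bC_\bu=\bC_\bv$ one computes $\bR^\rmT\bR=(\grad\bu)^{-\rmT}\bC_\bu(\grad\bu)^{-1}=\bI$ and $\det\bR=\det\grad\bv/\det\grad\bu>0$, so $\bR(\bx)\in\SO(n)$ a.e.\ and, by construction, $\grad\bv=\bR\grad\bu$ a.e. The proposition thus reduces to showing that $\bR$ is constant on the connected set $\Om$. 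Note also that $|\grad\bv|^2=\tr\bC_\bv=\tr\bC_\bu=|\grad\bu|^2$, so $\grad\bv$ inherits the $L^n$ integrability of $\grad\bu$ (even though $\bv$ is only assumed to lie in $W^{1,1}$), and $\dist(\grad\bu,\SO(n))=\dist(\grad\bv,\SO(n))$ pointwise a.e.

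In the smooth setting, the constancy of $\bR$ would follow by cross-differentiating $\partial_\alpha v_i=\sum_j R_{ij}\,\partial_\alpha u_j$: equality of mixed second partials of $\bv$ cancels the second-derivative terms in $\bu$ and yields
\[
\sum_j (\partial_\beta R_{ij})\,\partial_\alpha u_j=\sum_j (\partial_\alpha R_{ij})\,\partial_\beta u_j.
\]
Introducing the skew-symmetric matrices $\bA_\alpha:=\bR^\rmT\partial_\alpha\bR$, this reduces to a pointwise algebraic system that, because $\grad\bu$ is invertible, forces $\bA_\alpha\equiv\mathbf{0}$ and hence $\bR$ to be constant. An alternative geometric-analytic route is to apply the Friesecke--James--M\"uller rigidity estimate on shrinking balls $B_r(\bx_0)\Ksubset\Om$ separately to $\bu$ and to $\bv$; since $\dist(\grad\bu,\SO(n))=\dist(\grad\bv,\SO(n))$ the two rigidity gaps coincide, so the optimal rotations pin down a single rotation $\bR(\bx_0)$ at a full-measure set of Lebesgue centres, and a chain-of-balls comparison together with connectedness of $\Om$ propagates it globally.

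The main obstacle is transferring either strategy to the Sobolev setting at hand: $\bR$ has no a priori regularity, and $\grad\bu$ may degenerate wherever $\det\grad\bu$ is small. The hypothesis $|\grad\bu|^n\le K\det\grad\bu$ with $K\in L^n(\Om)$ is precisely the ingredient that absorbs this degeneracy, since it places $\bu$ in the class of mappings of finite distortion with integrable distortion, for which one has a continuous representative, the Lusin (N)-property, and a valid change-of-variables formula. I expect these ingredients are what make either the weak cross-differentiation argument or the rigidity-plus-chain argument rigorous, and they are also the reason the optimal integrability of $K$ depends on dimension, as indicated in the footnote.
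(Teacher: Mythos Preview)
The paper does not supply its own proof of this proposition: it is quoted verbatim as a special case of \cite[Theorem~1]{Lo13} and used as a black box. So there is no ``paper's proof'' to compare against; the relevant benchmark is Lorent's argument.

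As for your proposal, the pointwise construction $\bR(\bx):=\grad\bv(\bx)[\grad\bu(\bx)]^{-1}$ and the verification that $\bR(\bx)\in\SO(n)$ a.e.\ are fine, and you correctly identify that the whole content of the theorem is the constancy of $\bR$. But what you have written is an outline, not a proof. The cross-differentiation route requires $\bR$ to be weakly differentiable, and you have no regularity for $\bR$ beyond measurability; you say the finite-distortion hypothesis ``is precisely the ingredient that absorbs this degeneracy,'' but you never use it to actually obtain $\bR\in W^{1,1}_{\loc}$ or an equivalent statement. The geometric-rigidity route has a genuine logical gap: applying the Friesecke--James--M\"uller estimate separately to $\bu$ and $\bv$ on a ball produces rotations $\bQ_\bu$, $\bQ_\bv$ with $\|\grad\bu-\bQ_\bu\|$ and $\|\grad\bv-\bQ_\bv\|$ controlled by the same right-hand side, but this by itself does not force $\bR$ to be close to the constant $\bQ_\bv\bQ_\bu^{-1}$ on that ball in any norm; you would need an additional argument (e.g.\ that $\grad\bu$ does not concentrate near zero in $L^n$, which is where the distortion bound enters) to convert closeness of $\grad\bu$ and $\grad\bv$ to rotations into closeness of $\bR$ to a constant. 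Your closing sentence ``I expect these ingredients are what make \dots\ rigorous'' is accurate: the ingredients are right, but the work of making either route rigorous is exactly what Lorent's paper does, and it is nontrivial.
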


Ciarlet \& Mardare have established a number of results that
bound the distance between two mappings in a Sobolev space
by a function of the distance between their right Cauchy-Green strain
tensors in a corresponding Lebesgue space.  The particular result
we shall employ is \cite[Theorem~3]{CM15}:
\begin{proposition}\label{prop:CM15}
Fix $p\in(1,\infty)$ and $q\in[r,p]$,
where $r:=\max\{1,p/2\}$.  Let $\bv\in C^1(\Omc;\R^n)$
satisfy $\det\grad\bv>0$ in $\Omc$.  Suppose that
$\sD\subset\partial \Om$ is nonempty and relatively open.
Then there exists a
constant $C_M=C_M(p,q,\bv,\Om,\sD)>0$ such that
\[
\int_\Om\big|\bC_\bu
-\bC_\bv\big|^q\,\dd\bx
\ge
C_M\Big(\|\bu-\bv\|_{W^{1,p}(\Omega)}\Big)^p
\]
\n for all $\bu\in W^{1,2q}(\Omega;\R^n)$ that satisfy
$\det\grad\bu>0~a.e.$ in $\Om$ and $\bu=\bv$ on $\sD$.
\end{proposition}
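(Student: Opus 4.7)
The plan is to reduce the estimate to a linear Korn inequality on a transformed domain and then handle the inherent quadratic error by a pointwise case analysis.

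Set $\bw:=\bu-\bv$ and record the algebraic identity
\[
\bC_\bu-\bC_\bv=(\grad\bv)^\rmT\grad\bw+(\grad\bw)^\rmT\grad\bv+(\grad\bw)^\rmT\grad\bw,
\]
whose first two terms form the symmetric object $2\,\mathrm{sym}\bigl((\grad\bv)^\rmT\grad\bw\bigr)$ (linear in $\grad\bw$) while the last term is a quadratic remainder. Since $\bv\in C^1(\Omc;\R^n)$ and $\det\grad\bv>0$ on the compact set $\Omc$, the matrix $\grad\bv$ is uniformly invertible, and I would work under the standard additional hypothesis that $\bv$ is a $C^1$-diffeomorphism onto $\Om':=\bv(\Om)$, a Lipschitz domain.

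Performing the change of variable $\by=\bv(\bx)$ and writing $\tilde\bw(\by):=\bw(\bv^{-1}(\by))$, a direct computation shows
\[
\mathrm{sym}\bigl(\grad_\by\tilde\bw\bigr)=\tfrac12(\grad\bv)^{-\rmT}\bigl[(\bC_\bu-\bC_\bv)-(\grad\bw)^\rmT\grad\bw\bigr](\grad\bv)^{-1}.
\]
The standard $W^{1,p}$-Korn inequality on $\Om'$, applied to $\tilde\bw$ (which vanishes on the nonempty relatively open set $\bv(\sD)\subset\partial\Om'$) and combined with the pull-back via the uniformly bounded Jacobian $\det\grad\bv$, would yield the provisional estimate
\[
\int_\Om|\grad\bw|^p\,\dd\bx\le C_1\int_\Om|\bC_\bu-\bC_\bv|^p\,\dd\bx+C_2\int_\Om|\grad\bw|^{2p}\,\dd\bx.
\]

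The main obstacle is twofold: the quadratic term $\int|\grad\bw|^{2p}$ is not a priori controlled, and the right-hand side of the target uses the weaker $L^q$-norm (with $q\le p$). I would resolve both by splitting $\Om=E_1\sqcup E_2$ with $E_1:=\{|\grad\bw|\le\delta\}$ for a threshold $\delta=\delta(\bv)$. On $E_2$ the positive semidefiniteness of $\bC_\bu$ (whose trace equals $|\grad\bu|^2$) together with $|\grad\bu|\ge|\grad\bw|-|\grad\bv|$ supplies a reverse bound $|\bC_\bu-\bC_\bv|\ge c|\grad\bw|^2$, which controls the $E_2$-part of the quadratic term by $|\bC_\bu-\bC_\bv|^p$ and, since $2q\ge p$, directly gives $\int_{E_2}|\grad\bw|^p\le c\int_{E_2}|\bC_\bu-\bC_\bv|^q$. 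On $E_1$ the quadratic term satisfies $|\grad\bw|^{2p}\le\delta^p|\grad\bw|^p$ and absorbs into the left-hand side once $\delta$ is small enough that $C_2\delta^p<1$; moreover $|\grad\bw|$ is uniformly bounded on $E_1$, so the corresponding $L^p$-integral of $|\bC_\bu-\bC_\bv|$ on the right reduces to an $L^q$-integral by H\"older. Poincar\'e's inequality, applicable because $\bw=\mathbf{0}$ on the nonempty relatively open set $\sD$, then promotes the gradient bound to the full $W^{1,p}$-norm. The delicate technical point is the compatibility of the two requirements on $\delta$---small for absorption, large for the reverse bound---both of which depend solely on the fixed mapping $\bv$, so a careful quantitative choice (or a two-stage absorption in which the $E_2$-contribution is sent to the right-hand side first) closes the argument.
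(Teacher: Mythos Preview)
The paper does not prove this proposition; it is quoted as \cite[Theorem~3]{CM15} and presented in Section~\ref{sec:RGRSM} as a consequence of the geometric rigidity theorem of Friesecke, James, and M\"uller~\cite{FJM02}. The obstruction you flag at the end is fatal rather than delicate: absorption on $E_1$ forces $C_2\delta^p<1$, so $\delta$ must be small relative to the Korn constant of the transformed domain, while the reverse pointwise bound on $E_2$ needs $\delta$ to exceed a fixed multiple of $\|\grad\bv\|_{\infty,\Om}$. These requirements are incompatible in general, and no multi-region split repairs this, because on any intermediate set $\{\delta_1<|\grad\bw|\le\delta_2\}$ the difference $\bC_\bu-\bC_\bv$ can vanish identically while $\grad\bw$ does not---this happens exactly when $\grad\bu$ is a rotation times $\grad\bv$, and linear Korn applied to $\tilde\bw$ cannot detect that cancellation. (A secondary issue: your provisional estimate contains $\int_\Om|\grad\bw|^{2p}\,\dd\bx$, but the hypotheses give only $\grad\bw\in L^{2q}$ with $2q\le 2p$, so this integral need not even be finite when $q<p$.)

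The remedy is to change the tool. Apply the geometric rigidity estimate of~\cite{FJM02} (in its $L^p$ form) to $\tilde\bu:=\bu\circ\bv^{-1}$ rather than linear Korn to $\tilde\bw$. One has $\bC_{\tilde\bu}-\bI=(\grad\bv)^{-\rmT}(\bC_\bu-\bC_\bv)(\grad\bv)^{-1}$, with \emph{no quadratic remainder}; rigidity controls $\|\grad\tilde\bu-\bR\|_{p,\Om'}$ in terms of the distance of $\grad\tilde\bu$ to $\SO(n)$, which is bounded pointwise by $|\bC_{\tilde\bu}-\bI|$, and the boundary condition on $\bv(\sD)$ then pins $\bR$ near $\bI$. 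Your change of variables is the right first step; the inequality invoked on $\Om'$ must be the nonlinear rigidity estimate, not linear Korn.
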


\begin{remark}\label{rem:CM-1}  We note that $\bC_{\bu}=\bC_{\bv}$
does not necessarily imply that $\bu=\bv$ without further assumptions.
See, e.g., Ciarlet \& Mardare~\cite[p.~425]{CM04-1},
who attribute their counterexample to H.~Le~Dret (and a referee),
or Lorent~\cite[p.~659]{Lo13}.
\end{remark}


\subsection{Bounded Mean Oscillation}\label{sec:BMO}

We define the $\BMO$-seminorm\footnote{See, e.g., \cite[\S3.1]{Gr09}
for properties of $\BMO$.  Note that $\tsn{c}_{\BMO(U)} =0$ for any
constant $c$ and, otherwise, $\tsn{\! \cdot\!}_{\BMO(U)}$ obeys
the properties of a norm. Moreover, $\BMO(U)$ is complete with respect
to this seminorm.} of $\psi\in L^1_{\loc}(U)$ by
\be\label{eqn:BMO-V-norm}
\tsn{\psi}_{\BMO(U)}:=  \sup_{Q\Ksubset U}
 \dashint_Q |\psi(\bx)-\langle\psi\rangle_Q|\,\dd\bx,
\ee
\n where the supremum is to be taken over all
nonempty, bounded (open) $n$-dimensional
hypercubes\footnote{We shall henceforth refer to
$Q$ as a \emph{cube}, rather than a hypercube or square.}
 $Q$  \emph{with faces parallel to the coordinate hyperplanes}.  Here
\[
\langle\psi\rangle_U
:=\dashint_U \psi(\bx)\,\dd\bx
:= \frac1{|U|}\int_U \psi(\bx)\,\dd\bx
\]
\n \emph{denotes the average value of} $\psi$ and  $|U|$ denotes the
$n$-dimensional Lebesgue measure of any
bounded domain $U\subset\R^n$.
The space $\BMO(U)$ (Bounded Mean Oscillation) is defined by
\be\label{eqn:BMO-V}
\BMO(U)
:=
\{\psi\in L_{\loc}^1(U): \tsn{\psi}_{\BMO(U)} < \infty\}.
\ee
\n Note that one consequence of
\eqref{eqn:BMO-V-norm}--\eqref{eqn:BMO-V} is that
$L^\infty(U)\subset \BMO(U)$ with
\be\label{eqn:BMO-L-infty}
\tsn{\psi}_{\BMO(U)} \le 2\|\psi\|_{\infty,U}\
\text{ for all $\psi\in L^\infty(U)$.}
\ee


\n We note for future reference that if $U=\Om$, a Lipschitz domain,
then a result of P.~W.~Jones~\cite{Jo82} implies, in particular, that
\[
\BMO(\Om)\subset L^1(\Om).
\]
\n It follows that
\be\label{eqn:BMO-Om-norm}
\|\psi\|_{\BMO(\Om)}=\tsn{\psi}_{\BMO(\Om)} + | \langle\psi\rangle_{\Om}|
\ee
\n is a \emph{norm} on $\BMO(\Om)$.

\begin{remark}\label{rem:other-BMO}  1.~The standard example of a
function $\phi\in\BMO(\R^n)$ that is not bounded is
$\phi(\bx)=\ln|\bx|$.  2.~There are a number of other
\emph{equivalent seminorms} on $\BMO$.  The most
ubiquitous
involves the replacement of cubes $Q$ in \eqref{eqn:BMO-V-norm} by
open balls $B\Ksubset U$.   Another possibility is the use of balls
that get smaller as they
approach the boundary (see Brezis \& Nirenberg~\cite{BN96} who attribute
such results to P.~W.~Jones~\cite{Jo82}), i.e., the requirement that
there is a fixed $k\in (0,1)$ such that each ball,
$B=B_r(\bx)\subset\subset U$ of radius $r>0$ and centered at $\bx$,
satisfies
\[
r\le k\dist(\bx,\partial U).
\]
\n Another useful equivalent seminorm is
\[
\tsn{\psi}_*:=\sup_{Q\Ksubset U}
\dashint_Q \dashint_Q |\psi(\bz)-\psi(\bx)|\,\dd\bz\,\dd\bx;
\]
\n in particular (see, e.g., \cite[p.~6]{BN95})
\[
\tsn{\psi}_{\BMO(U)}\le\tsn{\psi}_*\le2\tsn{\psi}_{\BMO(U)}.
\]
\n The monotone convergence theorem can then be used show that
\[
\sup_{Q\subset U}
\dashint_Q \dashint_Q |\psi(\bz)-\psi(\bx)|\,\dd\bz\,\dd\bx
\]
\n is also an equivalent seminorm on $\BMO(U)$; it then follows that the
seminorm
\[
  \sup_{Q\subset U}
 \dashint_Q |\psi(\bx)-\langle\psi\rangle_Q|\,\dd\bx,
\]
\n which is used in \cite{DRS10,SS19},
is also equivalent to \eqref{eqn:BMO-V-norm}.
\end{remark}


\subsection{Further Properties of
\texorpdfstring{$\BMO$}{BMO}}\label{sec:maximal}

One of the main properties of $\BMO$ that we shall use is contained
in the following result.  Although the proof can be found in
\cite{SS19}, the significant analysis it is based upon is due to
Fefferman \& Stein~\cite{FS72}, Iwaniec~\cite{Iw82}, and Diening,
R$\overset{_\circ}{\mathrm{u}}$\v zi\v cka, \& Schumacher~\cite{DRS10}.


\begin{proposition}\label{thm:main-2}  Let $\Om\subset\R^n$ be a
Lipschitz\footnote{This result, as stated, is valid for a larger
class of domains:  Uniform domains.  (Since $\BMO\subset L^1$
for such domains.  See P.~W.~Jones~\cite{Jo82},
Gehring \& Osgood~\cite{GO79}, and e.g., \cite{Ge87}.)  A slightly
modified version of this result is valid for John domains.
 See \cite{SS19} and the references therein.}
domain.  Then, for all $q\in[1,\infty)$,
\[
\BMO(\Om)\subset L^q(\Om)
\]
\n with continuous injection, i.e., there is a constant
$J_1=J_1(q,\Om)>0$ such that,
for every $\psi\in\BMO(\Om)$,
\be\label{eqn:BMO-in-Lq}
\bigg(\,\dashint_\Om |\psi|^q\,\dd\bx\bigg)^{\!\!1/q}\!
\le
J_1\|\psi\|_{\BMO(\Om)}.
\ee
\n Moreover, if  $1\le p<q<\infty$,
then there exists a
constant $J_2=J_2(p,q,\Om)>0$ such that every
$\psi\in\BMO(\Om)$ satisfies
\be\label{eqn:RH}
||\psi||_{q,\Om}
\le
J_2\Big(||\psi||_{\BMO(\Om)}\Big)^{1-p/q}
\Big(||\psi||_{p,\Om}\Big)^{p/q}.
\ee
\n In addition, the constants $J_i$ are scale invariant, that is,
$J_i(\lambda U +\ba)=J_i(U)$ for every $\lambda>0$ and $\ba\in\R^n$.
Here (see (\ref{eqn:BMO-V-norm}))
$\|\cdot\|_{\BMO(\Om)}$ is given by (\ref{eqn:BMO-Om-norm}).
\end{proposition}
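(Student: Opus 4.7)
The plan is to reduce both estimates to classical results on cubes via P.~W.~Jones' extension theorem, and to obtain the interpolation \eqref{eqn:RH} via the Fefferman--Stein sharp maximal function. Fix a cube $Q_0$ with $\Om\Ksubset Q_0$ and $|Q_0|\le c_n|\Om|$. Since $\Om$ is a uniform domain, Jones' extension theorem supplies an operator $E\colon\BMO(\Om)\to\BMO(\R^n)$ with $\tsn{E\psi}_{\BMO(\R^n)}\le C(\Om)\tsn{\psi}_{\BMO(\Om)}$; write $\tilde\psi:=E\psi$.

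For \eqref{eqn:BMO-in-Lq}, apply the classical John--Nirenberg inequality to $\tilde\psi$ on $Q_0$:
\[
\bigl|\{\bx\in Q_0:|\tilde\psi(\bx)-\langle\tilde\psi\rangle_{Q_0}|>\lambda\}\bigr|
\le c_1|Q_0|\exp\bigl(-c_2\lambda/\tsn{\tilde\psi}_{\BMO(Q_0)}\bigr).
\]
Integrating via the layer-cake formula yields $\|\tilde\psi-\langle\tilde\psi\rangle_{Q_0}\|_{L^q(Q_0)}\le C_q|Q_0|^{1/q}\tsn{\psi}_{\BMO(\Om)}$. Restricting to $\Om$ and noting that $|\langle\tilde\psi\rangle_{Q_0}|\le|\langle\psi\rangle_\Om|+C'\tsn{\psi}_{\BMO(\Om)}$---since the averages of a BMO function on two nested sets of comparable measure differ by an amount bounded by the seminorm---yields \eqref{eqn:BMO-in-Lq} via \eqref{eqn:BMO-Om-norm}.

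For \eqref{eqn:RH}, introduce the restricted Fefferman--Stein sharp function
\[
\psi^{\#}(\bx):=\sup\Bigl\{\dashint_Q|\tilde\psi-\langle\tilde\psi\rangle_Q|\,\dd\by:\bx\in Q\subset Q_0\Bigr\},
\]
for which $\|\psi^{\#}\|_{L^\infty(Q_0)}\le C\tsn{\tilde\psi}_{\BMO(Q_0)}$. The Fefferman--Stein inequality, applicable because step 2 already places $\tilde\psi$ in $L^{p_0}(Q_0)$ for some $p_0<q$, gives $\|\tilde\psi-\langle\tilde\psi\rangle_{Q_0}\|_{L^q(Q_0)}\le C\|\psi^{\#}\|_{L^q(Q_0)}$. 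The trivial $L^p\cap L^\infty$-interpolation $\|\psi^{\#}\|_q\le\|\psi^{\#}\|_\infty^{1-p/q}\|\psi^{\#}\|_p^{p/q}$ combined with the Hardy--Littlewood bound $\|\psi^{\#}\|_p\le C\|\tilde\psi-\langle\tilde\psi\rangle_{Q_0}\|_p$ (valid for $p>1$) then yields \eqref{eqn:RH} after restricting back to $\Om$; the limit case $p=1$ is handled by a Calder\'on--Zygmund stopping-time argument, or equivalently by the distributional estimates of Iwaniec and of Diening et al.\ cited with the statement. Scale invariance is then immediate, as $\tsn{\cdot}_{\BMO}$, the averages $\dashint|\cdot|^q$, and the ratios $\|\cdot\|_{p,U}/|U|^{1/p}$ are all preserved under the rigid change $\bx\mapsto\lambda\bx+\ba$.

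The principal obstacle is \eqref{eqn:RH}: a direct combination of the John--Nirenberg tail with Chebyshev's inequality produces a term of the form $|\Om|\|\psi\|_{\BMO(\Om)}^q$ that dominates the desired $\|\psi\|_{p,\Om}^p\|\psi\|_{\BMO(\Om)}^{q-p}$ whenever $\psi$ is sharply concentrated (think of the indicator of a small ball in $\Om$). The sharp-function detour bypasses this by reducing matters to the trivial $L^p\cap L^\infty$-interpolation for $\psi^{\#}$, at the cost of using Hardy--Littlewood and treating $p=1$ separately.
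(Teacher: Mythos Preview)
The paper does not give its own proof of this proposition; it simply records that the proof is in \cite{SS19} and that the underlying analysis is due to Fefferman--Stein, Iwaniec, and Diening--R\r{u}\v{z}i\v{c}ka--Schumacher. The last of these supplies a decomposition/maximal-function machinery that works \emph{intrinsically} on John domains, so the Fefferman--Stein sharp-function estimate is available on $\Om$ itself and no extension is needed.

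Your route via Jones' extension is genuinely different and, for \eqref{eqn:BMO-in-Lq}, is perfectly adequate. For \eqref{eqn:RH}, however, there is a real gap. Jones' theorem gives you only $\tsn{E\psi}_{\BMO(\R^n)}\le C\tsn{\psi}_{\BMO(\Om)}$; it says nothing about $L^p$. Your chain ends with
\[
\|\tilde\psi-\langle\tilde\psi\rangle_{Q_0}\|_{q,Q_0}
\le C\,[\psi]_{\BMO}^{1-p/q}\,\|\tilde\psi-\langle\tilde\psi\rangle_{Q_0}\|_{p,Q_0}^{p/q},
\]
and ``restricting back to $\Om$'' is not free on either side. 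On the right you must convert $\|\tilde\psi-\langle\tilde\psi\rangle_{Q_0}\|_{p,Q_0}$ into $\|\psi\|_{p,\Om}$, which requires $\|\tilde\psi\|_{p,Q_0}\le C\|\psi\|_{p,\Om}$, i.e.\ an $L^p$ bound on the extension operator. On the left you must add back the constant $\langle\tilde\psi\rangle_{Q_0}$, and the resulting term $|\langle\tilde\psi\rangle_{Q_0}|\,|Q_0|^{1/q}$ must also be dominated by $\|\psi\|_{\BMO(\Om)}^{1-p/q}\|\psi\|_{p,\Om}^{p/q}$; this again needs $|\langle\tilde\psi\rangle_{Q_0}|\le C\|\psi\|_{1,\Om}/|\Om|$, hence $L^1$-boundedness of $E$. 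Neither follows from the $\BMO$ bound alone. For Lipschitz (or uniform) domains one \emph{can} choose an extension that is simultaneously bounded on every $L^p$ and on $\BMO$ --- either by checking Jones' Whitney-average construction directly or by using Stein's universal extension --- but that is a nontrivial additional ingredient you have not invoked. The intrinsic approach via \cite{DRS10} sidesteps this entirely; that is what it buys over your extension strategy.
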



\begin{remark}\label{rem:subset} Proposition~\ref{thm:main-2}
together with
\eqref{eqn:BMO-L-infty} shows that, for every $p\in[1,\infty)$,
\[
L^\infty(\Om) \subset \BMO(\Om)\subset L^p(\Om).
\]
Thus, $\BMO$ is a space that is ``between'' $L^\infty$ and all of the
other $L^p$-spaces.  However, researchers in Harmonic Analysis make use
of $\BMO$ as a replacement for $L^\infty$.
See, e.g., \cite[\S4.5]{Stein-1993}.
\end{remark}


According to L.~Nirenberg (\cite[pp.~707--709]{Jo85b}), the idea of
considering functions whose mean oscillation is bounded was conceived by
Fritz John. John's motivation appears to have been the analysis of
problems in Nonlinear Elasticity, where John had noticed that mappings
with small nonlinear strain (see \eqref{eqn:GSVST}) correspond to
deformation gradients the are small in $\BMO$ (see \cite{Jo61} or, e.g.,
\cite[Proposition~4.3 and Lemma~5.6]{SS19}).

The final result of this section follows from
Proposition~\ref{thm:main-2}. However, since the result
is a direct consequence of the scale invariance of the constant in
the same result for cubes this
result also follows from the original proof of
John \& Nirenberg~\cite{JN61}.

\begin{corollary}\label{cor:JN} Fix $n\ge2$.  Then, for every
$q\in(1,\infty)$, there exists a constant $\sN=\sN(n,q)$ such that,
for every bounded domain $V\subset\R^n$,
\be\label{eqn:JN2}
\tsn{\phi}_{\BMO(V)}
\le
\sup_{Q\Ksubset V}\bigg(\,
\dashint_Q |\phi(\bx)-\langle\phi\rangle_Q|^q\,\dd\bx\bigg)^{\!\!1/q}
\le
\sN(n,q)\tsn{\phi}_{\BMO(V)}.
\ee
\n for all $\phi\in\BMO(V)$.
\end{corollary}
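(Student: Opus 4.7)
The left-hand inequality in \eqref{eqn:JN2} is immediate from Jensen's inequality applied with the convex function $t\mapsto t^q$ (since $q>1$), so the entire content is the right-hand inequality. My plan is to reduce the general domain case to Proposition~\ref{thm:main-2} applied on individual cubes, and then exploit the stated scale invariance of the constant $J_1$ to get a bound that is independent of which cube one picks.

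Fix $\phi\in\BMO(V)$ and an arbitrary cube $Q\Ksubset V$. Since $Q$ is a Lipschitz domain and $\phi\in L^1_{\loc}(V)$, the restriction $\phi|_Q$ lies in $L^1(Q)$; moreover, because the supremum defining $\tsn{\phi}_{\BMO(Q)}$ is taken over a subfamily of the cubes used for $V$, we have $\tsn{\phi}_{\BMO(Q)}\le\tsn{\phi}_{\BMO(V)}$. I would now apply the continuous embedding \eqref{eqn:BMO-in-Lq} from Proposition~\ref{thm:main-2} to the mean-zero function $\psihat:=\phi-\langle\phi\rangle_Q\in\BMO(Q)$ on the Lipschitz domain $Q$:
\[
\bigg(\,\dashint_Q |\phi-\langle\phi\rangle_Q|^q\,\dd\bx\bigg)^{\!\!1/q}
\le
J_1(q,Q)\,\|\psihat\|_{\BMO(Q)}.
\]
Since $\langle\psihat\rangle_Q=0$, the norm on the right is just $\tsn{\psihat}_{\BMO(Q)}=\tsn{\phi}_{\BMO(Q)}\le\tsn{\phi}_{\BMO(V)}$.

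The crucial point is the scale invariance asserted in Proposition~\ref{thm:main-2}: $J_1(q,\lambda U+\ba)=J_1(q,U)$ for every $\lambda>0$ and $\ba\in\R^n$. Because every cube $Q$ with faces parallel to the coordinate hyperplanes is a translate and dilate of the unit cube $Q_0=(0,1)^n$, we obtain $J_1(q,Q)=J_1(q,Q_0)$ for \emph{every} such $Q$. Setting $\sN(n,q):=J_1(q,Q_0)$ therefore gives
\[
\bigg(\,\dashint_Q |\phi-\langle\phi\rangle_Q|^q\,\dd\bx\bigg)^{\!\!1/q}
\le
\sN(n,q)\,\tsn{\phi}_{\BMO(V)}
\]
with the same constant $\sN(n,q)$ for every $Q\Ksubset V$. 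Taking the supremum over $Q\Ksubset V$ yields the right-hand inequality in \eqref{eqn:JN2}, with $\sN$ depending only on $n$ and $q$ (in particular, independent of $V$).

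The only potential obstacle is bookkeeping: one has to make sure the hypotheses of Proposition~\ref{thm:main-2} are genuinely available for every cube $Q\Ksubset V$ (which they are, since cubes are Lipschitz) and that $\phi\in\BMO(Q)$ with $\|\phi-\langle\phi\rangle_Q\|_{\BMO(Q)}$ controlled by $\tsn{\phi}_{\BMO(V)}$ (which follows from the definition of the seminorm and the vanishing of the mean). Once the scale invariance of $J_1$ is invoked, the independence of the constant from the ambient domain $V$ follows immediately, and no further estimate is needed.
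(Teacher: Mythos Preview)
Your proof is correct and follows essentially the same route as the paper's: apply Proposition~\ref{thm:main-2} on each cube $Q\Ksubset V$ to the mean-zero function $\phi-\langle\phi\rangle_Q$, use $\tsn{\phi}_{\BMO(Q)}\le\tsn{\phi}_{\BMO(V)}$, and invoke the scale invariance of $J_1$ to obtain a cube-independent constant $\sN(n,q)=J_1(q,Q_0)$. The paper phrases the left-hand inequality via H\"older rather than Jensen, but the argument is otherwise identical.
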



\begin{remark}\label{rem:JN} Corollary~\ref{cor:JN} shows that
\[
\tsn{\phi}_{\BMO_q(V)}:=\sup_{Q\Ksubset V}
\bigg(\,
\dashint_Q |\phi(\bx)-\langle\phi\rangle_Q|^q\,\dd\bx\bigg)^{\!\!1/q}
\]
\n is an equivalent seminorm on $\BMO(V)$. This result was first
established by John \& Nirenberg~\cite{JN61} when $V=Q$, a cube; it is
there a consequence of what is now referred to as the John-Nirenberg
inequality, that is, the exponential decay of the distribution
function of  $|\phi-\langle\phi\rangle_Q|$ for cubes.
Inequality \eqref{eqn:JN2}  is also well-known when $V$ and $Q$ are
replaced by balls $B\Ksubset\widehat B$; see, e.g.,
Stein~\cite[pp.~144--146]{Stein-1993}.  Stein also shows that, for balls
$\widehat B$, the constant $\sN$ satisfies
$\sN(q,\widehat B)\le q\widehat \sN(\widehat B)$; the exponential decay of
$|\phi-\langle\phi\rangle_{\widehat B}|$ for balls then
follows from \eqref{eqn:JN2} and this growth estimate.
\end{remark}


\begin{proof}[Proof of Corollary~\ref{cor:JN}]  Let $q\in(1,\infty)$ and
suppose that  $V\subset\R^n$ is a bounded domain. Fix a cube
$Q \Ksubset V$.
Then, in view of \eqref{eqn:BMO-in-Lq} in Theorem~\ref{thm:main-2}
(with $\Om=Q$) and \eqref{eqn:BMO-Om-norm}, there exists a scale
invariant constant $J_1(q,Q)$
such that, for all $\psi\in\BMO({Q})$,
\be\label{eqn:BN-2}
J_1^{\mi1}\bigg(\,\dashint_{Q } |\psi|^q\,\dd\bx\bigg)^{\!\!1/q}\!
\le
\tsn{\psi}_{\BMO({Q })}
+\Big|\dashint_{Q } \psi\,\dd\bx\Big|.
\ee


Now, suppose that $\phi\in\BMO(V)$; then
$\phi\in\BMO(Q)$.  Define
$\psi:=\phi-\langle\phi\rangle_{Q }$.  Thus,
$\psi\in\BMO(Q)$,
$\langle\psi\rangle_{Q }=0$, and hence \eqref{eqn:BN-2} yields
\be\label{eqn:BN-3}
J_1^{\mi1}
\bigg(\,\dashint_{Q} |\phi-\langle\phi\rangle_{Q}|^q
\,\dd\bx\bigg)^{\!\!1/q}\!
\le
\tsn{\phi-\langle\phi\rangle_{Q }}_{\BMO({Q })}
=
\tsn{\phi}_{\BMO({Q})}.
\ee
\n Note that
\[
\tsn{\phi}_{\BMO({Q})}
:=
\sup_{\widehat Q\Ksubset Q}
\dashint_{\widehat Q}
|\phi(\bx)-\langle\phi\rangle_{\widehat Q}|\,\dd\bx
\le
\sup_{\widehat Q\Ksubset V}
\dashint_{\widehat Q}
|\phi(\bx)-\langle\phi\rangle_{\widehat Q}|\,\dd\bx
=:
\tsn{\phi}_{\BMO(V)},
\]
\n which together with \eqref{eqn:BN-3} and H\"older's inequality
yields
\be\label{eqn:BN-4}
\dashint_{Q} |\phi-\langle\phi\rangle_{Q}|
\,\dd\bx
\le
\bigg(\,\dashint_{Q} |\phi-\langle\phi\rangle_{Q }|^q
\,\dd\bx\bigg)^{\!\!1/q}\!
\le
J_1\tsn{\phi}_{\BMO(V)}.
\ee
\n The desired result, \eqref{eqn:JN2}, now follows after
taking the supremum of \eqref{eqn:BN-4} over all cubes
$Q\Ksubset V$ and noting that $\sN(n,q):=J_1(n,q,Q)$ is scale
invariant and hence independent of the cube.
\end{proof}


\section{Korn's Inequality}\label{sec:Korn}

In this section we shall obtain a version of Korn's inequality that
involves the $\BMO$-seminorm of both the gradient of a function and the
symmetric part of its gradient.  Our result is a simple consequence
of the following result of Diening,
R$\overset{_\circ}{\mathrm{u}}$\v zi\v cka, \& Schumacher.

\begin{proposition}\label{prop:Korn2-alt}
\emph{(\cite[Theorem~5.17]{DRS10})}
Let $\Om\subset\R^n$, $n\ge2$,
be a bounded Lipschitz\footnote{In \cite{DRS10} this
result is established for John domains.} domain.
Suppose that $q\in(1,\infty)$.
Then there exists a  scale invariant constant $K=K(q,\Om)>0$
such that, for all $\bu\in W^{1,q}(\Om;\R^n)$,
\be\label{eqn:Korn2}
\dashint_\Om
\left|\grad\bu-\langle\grad\bu\rangle_\Om\right|^q\,\dd\bx
\le
K\dashint_\Om
\left|\grads\bu-\langle\grads\bu\rangle_\Om\right|^q\,\dd\bx,
\ee
\n where $\grads\bu$ denotes the symmetric part of the gradient of $\bu$,
that is,
\[
\grads\bu:= \tfrac12\left[\grad\bu+(\grad\bu)^\rmT\right].
\]
\end{proposition}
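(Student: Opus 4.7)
The plan is to reduce the stated inequality to the classical second Korn inequality on a Lipschitz domain. Given $\bu \in W^{1,q}(\Om;\R^n)$, set $\bA := \langle\grad\bu\rangle_\Om \in \Mn$ and define $\bv(\bx) := \bu(\bx) - \bA\bx$. Then $\grad\bv = \grad\bu - \bA$ and $\grads\bv = \grads\bu - \tfrac12(\bA+\bA^\rmT)$, so $\langle\grad\bv\rangle_\Om = \mathbf{0}$ and $\langle\grads\bv\rangle_\Om = \mathbf{0}$; moreover $\grad\bv - \langle\grad\bv\rangle_\Om = \grad\bu - \langle\grad\bu\rangle_\Om$ and likewise for the symmetric parts. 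Both sides of \eqref{eqn:Korn2} are therefore invariant under replacing $\bu$ by $\bv$, and it suffices to prove
\[
\dashint_\Om |\grad\bv|^q\,\dd\bx \le K\dashint_\Om |\grads\bv|^q\,\dd\bx
\]
for $\bv \in W^{1,q}(\Om;\R^n)$ with $\langle\grad\bv\rangle_\Om = \mathbf{0}$.

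For this reduced inequality I would invoke the classical second Korn inequality on a Lipschitz domain. Since $\grad\bv$ is unchanged by adding a constant vector to $\bv$, I may subtract $\langle\bv\rangle_\Om$ and assume also that $\langle\bv\rangle_\Om = \mathbf{0}$. The vanishing of $\langle\grad\bv\rangle_\Om$ forces the skew part of $\grad\bv$ to have zero mean, which together with $\langle\bv\rangle_\Om = \mathbf{0}$ places $\bv$ in the natural complement of the space of infinitesimal rigid motions. The classical Korn inequality then gives $\|\bv\|_{W^{1,q}(\Om)} \le C(q,\Om)\|\grads\bv\|_{L^q(\Om)}$, and in particular $\|\grad\bv\|_{L^q(\Om)}^q \le K\|\grads\bv\|_{L^q(\Om)}^q$, which is the claim.

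The main obstacle is obtaining a constant $K$ that is \emph{scale invariant}, as asserted in the proposition. A soft compactness proof of the classical second Korn inequality yields no quantitative control as the domain is rescaled and therefore will not suffice. To secure scale invariance I would follow the constructive route of Diening, R$\overset{_\circ}{\mathrm{u}}$\v zi\v cka, \& Schumacher, based on the Ne\v cas/Lions lemma
\[
\|p - \langle p\rangle_\Om\|_{L^q(\Om)} \le C\,\|\grad p\|_{W^{-1,q}(\Om)}
\]
combined with the pointwise identity
\[
\partial_k\partial_l v_j = \partial_l(\grads\bv)_{jk} + \partial_k(\grads\bv)_{jl} - \partial_j(\grads\bv)_{kl},
\]
which rewrites each second derivative of $\bv$ as a first derivative of entries of $\grads\bv$. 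This representation is homogeneous under the affine transformation $\bx \mapsto \lambda\bx + \bc$, and because the $\dashint$-form of the inequality is itself scale-covariant, the resulting constant depends only on the shape of $\Om$, which is exactly the scale invariance asserted in Proposition~\ref{prop:Korn2-alt}.
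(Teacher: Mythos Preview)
The paper does not supply its own proof of this proposition; it is quoted from \cite[Theorem~5.17]{DRS10}, and the only argument the paper adds is the one-line Remark that ``the scale invariance of $K$ is clear since the average value of any function is scale invariant.'' Your reduction to the case $\langle\grad\bv\rangle_\Om=\mathbf{0}$ by subtracting the linear map $\bA\bx$ is correct and is indeed the standard way to pass between \eqref{eqn:Korn2} and the classical second Korn inequality on a fixed domain.

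Where your proposal goes astray is in the last paragraph: you identify scale invariance as ``the main obstacle'' and then sketch a constructive Ne\v cas/Lions argument to secure it. This is unnecessary. Scale invariance here is not a deep property to be earned via a quantitative proof; it is an immediate consequence of the \emph{form} of the inequality, exactly as the paper's Remark observes. If $\tilde\Om=\lambda\Om+\ba$ and $\tilde\bu(\by)=\bu((\by-\ba)/\lambda)$, then $\grad\tilde\bu-\langle\grad\tilde\bu\rangle_{\tilde\Om}$ and $\grads\tilde\bu-\langle\grads\tilde\bu\rangle_{\tilde\Om}$ each pick up the same factor $\lambda^{-1}$, and the dashed integrals absorb the Jacobian of the change of variables. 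Both sides of \eqref{eqn:Korn2} therefore scale identically, so whatever constant works for $\Om$ works for every $\lambda\Om+\ba$. Thus once you have the classical Korn inequality on the single domain $\Om$ --- by compactness or any other means --- the scale-invariant statement follows for free; no constructive control of constants is needed.
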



\begin{remark} The scale invariance of
$K$ is clear since the average value of any function is scale invariant.
\end{remark}


\subsection{Korn's Inequality in \texorpdfstring{$\BMO$}{BMO}}

\begin{theorem} \label{prop:BMO-Korn} Fix $n\ge2$.
Then there exists a constant $\sK=\sK(n)>0$
such that, for any bounded domain $U\subset\R^n$,
\be\label{eqn:BMO-Korn}
\tsn{\grad\bu}_{\BMO(U)}\le \sK\tsn{\grads\bu}_{\BMO(U)},
\ee
\n for every $\bu\in W_{\loc}^{1,1}(U;\R^n)$ with
$\grad\bu\in \BMO(U)$.
\end{theorem}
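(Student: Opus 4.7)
The plan is to combine two ingredients already developed in the paper: the classical Korn inequality of Diening--R\r{u}\v{z}i\v{c}ka--Schumacher (Proposition~3.1) applied on individual cubes, and the $L^q$-based equivalent seminorm characterization of $\BMO$ from Corollary~\ref{cor:JN}. The point is that the $\BMO$-seminorm is a supremum of $L^1$ oscillations over cubes $Q\Ksubset U$; on each such cube we can run a standard Korn inequality, and the $L^q$ version of the $\BMO$-seminorm lets us absorb the mismatch between $L^q$ Korn and $L^1$ oscillations.

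Fix a convenient exponent, say $q=2$. For any cube $Q\Ksubset U$, observe first that $\bu\in W^{1,q}(Q;\R^n)$: indeed, $\bu\in W_{\loc}^{1,1}(U;\R^n)$ gives $\bu\in L^1(Q)$, while the hypothesis $\grad\bu\in\BMO(U)$ together with $\tsn{\grad\bu}_{\BMO(Q)}\le\tsn{\grad\bu}_{\BMO(U)}$ and Proposition~\ref{thm:main-2} applied on the Lipschitz domain $Q$ yields $\grad\bu\in L^q(Q)$, hence $\bu\in W^{1,q}(Q;\R^n)$ by Poincar\'e. Apply Proposition~\ref{prop:Korn2-alt} on the cube $Q$ (with the same orientation as the cubes in the $\BMO$-seminorm, so the Korn constant is just $K=K(q,n)$ by scale invariance) to obtain
\[
\dashint_Q \bigl|\grad\bu-\langle\grad\bu\rangle_Q\bigr|^q\,\dd\bx
\le K(q,n)\dashint_Q \bigl|\grads\bu-\langle\grads\bu\rangle_Q\bigr|^q\,\dd\bx.
\]

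Taking the supremum over all cubes $Q\Ksubset U$ on both sides and then invoking Corollary~\ref{cor:JN}, first on $\grad\bu$ (lower bound) and then on $\grads\bu$ (upper bound), gives
\[
\tsn{\grad\bu}_{\BMO(U)}
\le \Bigl(\sup_{Q\Ksubset U}\dashint_Q|\grad\bu-\langle\grad\bu\rangle_Q|^q\Bigr)^{\!1/q}
\le K(q,n)^{1/q}\,\sN(n,q)\,\tsn{\grads\bu}_{\BMO(U)}.
\]
Setting $\sK(n):=K(q,n)^{1/q}\sN(n,q)$ for the chosen $q$ yields \eqref{eqn:BMO-Korn}.

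The only place where care is needed is making sure both constants are genuinely independent of $U$ and of the particular cube $Q$: the DRS constant is scale invariant and depends only on the Lipschitz character of the domain, which for a cube of fixed orientation is universal in $n$, and the constant $\sN(n,q)$ in Corollary~\ref{cor:JN} is scale invariant by construction. I do not foresee any genuine obstacle — once the cube-by-cube Korn inequality is combined with the $L^q$-equivalent $\BMO$ seminorm, the statement follows in a few lines. The only mildly nontrivial point is verifying $\bu\in W^{1,q}(Q;\R^n)$ so that Proposition~\ref{prop:Korn2-alt} is legally applicable, but this is immediate from $\grad\bu\in\BMO(U)\subset L^q_{\loc}(U)$.
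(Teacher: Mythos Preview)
Your proof is correct and follows essentially the same route as the paper: apply the scale-invariant Korn inequality of Proposition~\ref{prop:Korn2-alt} with $q=2$ on each cube $Q\Ksubset U$, bound the $L^1$ oscillation of $\grad\bu$ by its $L^2$ oscillation via H\"older (this is the ``lower bound'' side of Corollary~\ref{cor:JN}, which the paper writes out explicitly), take the supremum over cubes, and then use Corollary~\ref{cor:JN} on $\grads\bu$ to pass back to the $\BMO$-seminorm. Your verification that $\bu\in W^{1,q}(Q;\R^n)$ is slightly more careful than the paper's, which only records $\grad\bu\in L^q(Q)$.
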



\begin{remark} Note that the above inequality is valid for
\textbf{all bounded
domains}, unlike the standard Korn inequalities, which are valid only for
John domains (see, e.g., \cite{JK17}).
\end{remark}


\begin{proof}[Proof of Theorem~\ref{prop:BMO-Korn}]  Fix a bounded domain
$U\subset\R^n$. Let
$\bu\in W_{\loc}^{1,1}(U;\R^n)$ with
$\grad\bu\in \BMO(U)$ and suppose that  $Q\Ksubset U$ is a cube.
The definition of $\BMO(U)$, \eqref{eqn:BMO-V}, yields
$\grad\bu\in\BMO(Q)\cap L^1(Q)$.  Inequality~\eqref{eqn:BMO-in-Lq} in
Proposition~\ref{thm:main-2} then implies that
$\grad\bu\in L^q(Q)$ for every $q\in[1,\infty)$.


Next, by Korn's inequality, Proposition~\ref{prop:Korn2-alt}, there
exists a constant $K=K(2,Q)$, which is independent of $\bu$, such
that
\be\label{eqn:Korn2-cube-2}
\bigg(\,\dashint_Q
\left|\grad\bu-\langle\grad\bu\rangle_Q\right|\,\dd\bx\bigg)^{\!2}
\le
\dashint_Q
\left|\grad\bu-\langle\grad\bu\rangle_Q\right|^2\dd\bx
\le
K\dashint_Q
\left|\grads\bu-\langle\grads\bu\rangle_Q\right|^2\dd\bx,
\ee
\n where the first inequality in \eqref{eqn:Korn2-cube-2}
follows from H\"older's inequality.
If we now take the supremum of \eqref{eqn:Korn2-cube-2}
over all cubes $Q\Ksubset U$ and
make use of the scale invariance of $K$ and the definition of the
$\BMO$-seminorm, \eqref{eqn:BMO-V-norm}, we find that
\be\label{eqn:last}
\Big(\tsn{\grad\bu}_{\BMO(U)}\Big)^{2}
\le
K\sup_{Q\Ksubset U}\dashint_Q
\left|\grads\bu-\langle\grads\bu\rangle_Q\right|^2\dd\bx.
\ee
\n The desired result, \eqref{eqn:BMO-Korn}, now follows from
\eqref{eqn:last}, Corollary~\ref{cor:JN},
and the fact that the
constant $\sN=\sN(n,q)$ in \eqref{eqn:JN2} is independent of the domain.
\end{proof}


\section{Nonlinear Elasticity}\label{sec:NLE}

In the remainder of this manuscript we shall focus on the minimization
problem that arises when one considers the theory of Nonlinear
(Finite) Hyperelasticity.


\subsection{The Constitutive Relation}\label{sec:CR}

We consider a \emph{body} that for convenience we identify with the
closure of a \emph{bounded Lipschitz domain} $\Omega\subset\R^n$,
$n=2$ or $n=3$, which it
occupies in a fixed reference configuration.  A \emph{deformation}
of $\overline{\Omega}$ is a mapping that lies in the space
\[
\Def:=\{\bu\in W^{1,2}(\Omega;\R^n): \det\grad\bu>0\ a.e.\},
\]
\n where $\det\bF$ denotes the determinant of $\bF\in\Mn$.
 We assume that the body is composed of a hyperelastic material with
\emph{stored-energy density}\footnote{Our formulation implicitly
assumes that the response of the material is
invariant under a change in observer.  See, e.g., \cite[\S48]{GFA10}.}
$\sigma:\overline{\Omega}\times\Psymn\to[0,\infty)$.  The quantity
$\sigma(\bx,\bC_\bu(\bx))$ gives the elastic energy stored
at almost every point $\bx\in\Omega$ of a deformation $\bu\in\Def$.
Here, and in the sequel,  $\bC_\bu=[\grad\bu]^\rmT\grad\bu$, the
right Cauchy-Green strain tensor  (cf.~\eqref{eqn:RCGST}).


\begin{hypothesis}\label{def:W}  We assume that $\sigma$ satisfies
the following:
\begin{enumerate}[topsep=-2pt]
\item $\bC\mapsto \sigma(\bx,\bC) \in C^3(\Psymn)$,
for $a.e.~\bx\in\Omega$;
\item $(\bx,\bC)\mapsto\DD^k \sigma(\bx,\bC)$, $k=0,1,2,3$, are each
(Lebesgue) measurable on their common domain
$\Omega\times\Psymn$; and
\item  $(\bx,\bC)\mapsto\DD^k \sigma(\bx,\bC)$, $k=0,1,2,3$,
are each bounded on $\overline{\Omega}\times K$ for every
nonempty compact $K\subset \Psymn$.  Here
\end{enumerate}\vspace{-.2cm}
\[
\DD^0 \sigma(\bx,\bC):=\sigma(\bx,\bC), \qquad
\DD^k \sigma(\bx,\bC)
:=
\frac{\partial^k}{\partial\bC^k}\sigma(\bx,\bC)
\]
\n denotes $k$-th derivative of the function
$\bC\mapsto \sigma(\cdot,\bC)$.
We note, in particular, that
\[
\DD\sigma:\overline{\Omega}\times\Psymn\to\Symn, \qquad
\DD^2\sigma:\overline{\Omega}\times\Psymn\to\Lin(\Symn;\Symn),
\]
\n where $\Lin(\mathcal{U};\mathcal{V})$ denotes the
set of linear maps from
the vector space $\mathcal{U}$ to the vector space $\mathcal{V}$.  Thus,
for every $\bC\in\Psymn$, $\bE\in\Symn$, and almost every $\bx\in\Om$,
\[
\DD\sigma(\bx,\bC)\in \Symn, \quad
\DD^2\sigma(\bx,\bC)[\bE]\in \Symn\!.
\]
\end{hypothesis}


The \emph{second Piola-Kirchhoff stress tensor} $\bK$ is defined to be
twice the derivative of $\sigma$, i.e.,
\be\label{eqn:sPKS}
\bK(\bx,\bC):= 2\frac{\partial}{\partial\bC}\sigma(\bx,\bC)
=2\DD\sigma(\bx,\bC),
\qquad \bK :\overline{\Omega}\times\Psymn\to\Symn,
\ee
\n while the \emph{first Piola-Kirchhoff stress tensor} $\bS$
is given by
\be\label{eqn:fPKS}
\bS(\bx,\bF) := \bF \bK(\bx,\bF^\rmT\bF),
\qquad \bS :\overline{\Omega}\times\Mnp\to\Mn\!,
\ee
\n where $\Mnp$ denotes the set of $n$ by $n$ matrices with positive
determinant.  Although the tensor $\bK$ is the derivative of the stored
energy, it is the tensor $\bS$ that is most convenient to use in the
equilibrium (Euler-Lagrange) equations
(see \eqref{eqn:EE}--\eqref{eqn:SEE}).
For any injective deformation $\bu\in\Def\cap\, C^1(\Omc;\R^n)$, the
\emph{Cauchy stress tensor} $\bT=\bT(\by)\in\Symn$
is given by
\be\label{eqn:CS}
\bT(\by)
:=\bF\bK(\bx,\bF^\rmT\bF)\bF^\rmT(\det\bF)^{\mi1},
\quad \text{$\bF:=\grad\bu(\bx)$, $\by=\bu(\bx)$.}
\ee
\n The eigenvalues of $\bT(\by)$ are called \emph{the principal stresses}
at $\by\in\bu(\Om)$.
  The  \emph{elasticity tensor} $\CC$ is  defined to be four times
the second derivative of
$\bC\mapsto \sigma(\bx,\bC)$, that is,
\be\label{eqn:ET-A}
\CC(\bx,\bC):=4\frac{\partial^2}{\partial\bC^2} \sigma(\bx,\bC)
=4\DD^2\sigma(\bx,\bC).
\ee
\n In view of the symmetry of the second gradient,
\[
\bB:\CC(\bx,\bC)[\bE]=\bE:\CC(\bx,\bC)[\bB]
\]
\n for all $\bC\in\Psymn$ and all $\bB,\bE\in\Symn$.


\begin{definition}  The elasticity tensor is said to be
\emph{uniformly positive definite} at a deformation
$\bu\in\Def$
provided that there exists
a constant $c>0$ such that, for every $\bE\in\Symn$
and $a.e.~\bx\in\Omega$,
\[
\bE:\CC\big(\bx,\bC_\bu(\bx)\big)[\bE]\ge c |\bE|^2,
\]
\n where $\bC_\bu=(\grad\bu)^\rmT\grad\bu$.  The reference
configuration is said to be \emph{stress free} provided that,
\[
\bK(\bx,\bI) =\mathbf{0}\ \text{ for $a.e.~\bx\in\Omega$},
\]
\n where $\bI\in\Mn$ denotes the identity matrix.
\end{definition}

\begin{remark}  Let $n=3$ and suppose that
$\bu\in\Def\cap\, C^1(\Omega;\R^n)$ is injective.
Further, let $\Gamma\subset\bu(\Om)$
be a smooth, oriented surface with continuous outward unit normal
field $\by\mapsto\bm(\by)$, $\by\in\Gamma$.
If $\bx\mapsto\bK(\bx,\bC)$ is continuous on $\Om$, then,
for any $\by=\bu(\bx)$ with $\by\in\Gamma$,
\[
\bT(\by)\bm(\by),
\]
\n gives the force, per unit (deformed) area, exerted across $\Gamma$
upon the material on the negative side of $\Gamma$ by the material on
the positive side $\Gamma$
(see, e.g., \cite[p.~97]{Gu81} or \cite[\S19]{GFA10}).
\end{remark}

\begin{remark} One can alternatively assume that the
stored-energy function depends
on the deformation gradient $\grad\bu$. In this case one postulates a
(frame-indifferent) function $W:\overline{\Omega}\times\Mnp\to[0,\infty)$,
which will satisfy, for $a.e.~\bx\in\Om$,
\be\label{eqn:W=sigma}
W(\bx,\bF) = \sigma(\bx,\bF^\rmT\bF)\ \text{ for every $\bF\in\Mnp$}.
\ee
\n In this formulation one usually defines the elasticity tensor as
the second derivative of $W$ with respect to $\bF$, that is,
\be\label{eqn:ET-alt}
\A(\bx,\bF):=\frac{\partial^2}{\partial\bF^2} W(\bx,\bF).
\ee
\n  If we now twice differentiate \eqref{eqn:W=sigma}
(see, e.g., \cite[Lemma~5.4]{SS19}),
we conclude, with the
aid of \eqref{eqn:sPKS}, \eqref{eqn:ET-A},
\eqref{eqn:ET-alt}, and the symmetry of the second derivative,
 that
\be\label{eqn:A=CplusK}
\begin{aligned}
\bH:\A(\bx,\bF)[\bH]
&=\tfrac12(\bH^\rmT\bF+\bF^\rmT\bH):
\CC(\bx,\bF^\rmT\bF)\big[\tfrac12(\bH^\rmT\bF+\bF^\rmT\bH)\big]\\
&\ \ +\bK(\bx,\bF^\rmT\bF):(\bH^\rmT\bH),
\end{aligned}
\ee
\n for all $\bF\in \Mnp$ and $\bH\in\Mn$.  In particular, when the reference
configuration is stress free, it follows that
\[
\bH:\A(\bx,\bI)[\bH]
=\tfrac12(\bH^\rmT + \bH):
\CC(\bx,\bI)\big[\tfrac12(\bH^\rmT + \bH)\big].
\]
\n Thus, both $\CC(\bx,\bI)$ and $\A(\bx,\bI)$ correspond to the classical
elasticity tensor used in the linear theory (see, e.g., \cite{Gu72}).
\end{remark}


\subsection{Equilibrium Solutions and Energy
Minimizers}\label{sec:ES-EM-NLE}

 We assume the body is subject to
dead loads.  We take
\[
\partial\Omega = \overline{\sD} \cup \overline{\sS}\quad
\text{with $\sD$ and $\sS$ relatively open and }
\sD\cap\sS=\varnothing.
\]
\n In addition, we shall suppose that $\sD\ne\varnothing$.
We assume that a Lipschitz-continuous function
$\bd:\sD\to\R^n$
is prescribed;  $\bd$ will give the deformation of $\sD$.
If $\sS \ne\varnothing$ we assume that a function
$\bs\in L^2(\sS;\R^n)$
is prescribed; for $\sH^{n-1}$-$a.e.~\bx\in\sS$,
$\bs(\bx)$ will give the
surface force (per unit area when $n=3$)
exerted on the body at the point $\bx$ by its environment.
Here $\sH^k$ denotes
$k$-dimensional Hausdorff measure\footnote{Thus,
when $\sS\subset\R^3$ is a
smooth surface, $\sH^2(\sS)$ gives the area of $\sS$.}
(see, e.g., \cite[Chapter~2]{EG92}).
Finally, we suppose that a
function $\bb\in L^2(\Omega;\R^n)$ is prescribed;
for $a.e.~\bx\in\Omega$, $\bb(\bx)$ will give the
body force (per unit volume when $n=3$)
exerted on the body at the point $\bx$ by its environment.
  The set of
\emph{Admissible Deformations} will be denoted by
\[
\AD:=\{\bu\in\Def \cap\, W^{1,\infty}(\Omega;\R^n):
\bu=\bd \text{ on $\sD$}\}.
\]
\n The  \emph{total energy} of an admissible deformation
$\bu\in\AD$ is defined to be
\be\label{eqn:TE}
\E(\bu):=\int_\Omega \Big[\sigma\big(\bx,\bC_\bu(\bx)\big)
-\bb(\bx)\cdot\bu(\bx)\Big]\dd\bx
-\int_{\sS} \bs(\bx)\cdot\bu(\bx)\,\dd\sH^{n-1}_\bx
\ee
\n with $\bC_\bu:=(\grad\bu)^\rmT\grad\bu$.
The first variation of $\E$ is
given by
\[
\begin{aligned}
\delta\E(\bu)[\bw]=&\int_\Omega \DD\sigma\big(\bx,\bC_\bu(\bx)\big):
\Big(\big[\grad\bu(\bx)\big]^\rmT\grad\bw(\bx)
+\big[\grad\bw(\bx)\big]^\rmT\grad\bu(\bx)\Big)\dd\bx\\
&-\int_\Omega\bb(\bx)\cdot\bw(\bx)\,\dd\bx
-\int_{\sS} \bs(\bx)\cdot\bw(\bx)\,\dd\sH^{n-1}_\bx,
\end{aligned}
\]
\n for all \emph{variations} $\bw\in\Var$, where
\[
\Var:=\{\bw\in W^{1,2}(\Omega;\R^n):
\bw=\mathbf{0} \text{ on $\sD$}\}.
\]
\n The second variation of $\E$ is then given by
(see \eqref{eqn:sPKS}, \eqref{eqn:ET-A}, and \eqref{eqn:A=CplusK})
\be\label{eqn:SV}
\begin{aligned}
\delta^2\E(\bu)&[\bw,\bw]
= \int_\Omega \bK\big(\bx,\bC_\bu(\bx)\big):
\big[(\grad\bw)^\rmT\grad\bw\big]\dd\bx\\
&+\tfrac14\int_\Omega \big[(\grad\bu)^\rmT\grad\bw+(\grad\bw)^\rmT\grad\bu\big]:
\CC\big(\bx,\bC_\bu(\bx)\big)
\big[(\grad\bu)^\rmT\grad\bw+(\grad\bw)^\rmT\grad\bu\big]\,\dd\bx.
\end{aligned}
\ee

\begin{remark}\label{rem:PSV} It is clear from \eqref{eqn:SV} that the
positivity of the second variation, i.e., $\delta^2\E(\bu)\ge0$, is
not a consequence of the positivity of the elasticity tensor $\CC$
alone.  However, the
second variation is positive whenever both $\CC$  and
$\bK$ are positive definite (see Lemma~\ref{lem:L-posdef}).
\end{remark}

We shall assume that we are given a deformation
$\bu\e\in\AD$ that is a weak
solution of the \emph{Equilibrium Equations} corresponding
to \eqref{eqn:TE}, i.e., $\delta\E(\bu)=0$ or, equivalently,
\be\label{eqn:EE}
0=\int_\Omega
\Big[\bS\big(\bx,\grad\bu\e(\bx)\big)
:\grad\bw(\bx)
-\bb(\bx)\cdot\bw(\bx)\Big]\dd\bx
-\int_\sS \bs(\bx)\cdot\bw(\bx)\,\dd \sH^{n-1}_\bx
\ee
\n for all $\bw\in\Var$, where
$\bS$ is given by  \eqref{eqn:sPKS}--\eqref{eqn:fPKS}.
%
If $\sD=\partial\Om$ we shall call $\bu\e$ a solution of the
\emph{(pure) displacement problem}.  Otherwise, we shall refer to
such a  $\bu\e$ as a solution of the
\emph{(genuine) mixed problem}.
If in addition $\sigma\in C^2(\Omc\times\Psymn)$ and
$\bu\e\in C^2(\Omega;\R^n)\cap C^1(\overline{\Omega};\R^n)$, then
$\bu\e$ will be a \emph{classical solution of the equations of
equilibrium} (see, e.g., \cite[\S2.6]{Ci1988}, \cite[\S27]{Gu81},
or \cite[\S49]{GFA10}),
i.e., $\bu\e$ will satisfy
\be\label{eqn:SEE}
\begin{gathered}
\Div \bS(\grad\bu\e) +\bb= \mathbf{0}\
\text{ in $\Omega$,}\\
 \bS(\grad\bu\e)\bn = \bs\
\text{ on $\sS$,}\qquad
\bu\e=\bd\
\text{ on $\sD$,}
\end{gathered}
\ee
\n where $\bn(\bx)$ denotes the outward unit normal to $\Omega$ at
$\sH^{n-1}$-$a.e.~\bx\in\sS$ and $\Div \bM\in\R^n$ is given by
$(\Div \bM)_i=\sum_j \frac{\partial}{\partial \bx_j} M_{ij}$.


We are interested in the local minimality (in an appropriate
topology) of solutions of  \eqref{eqn:EE}.
For future use we note that, for every $\bu,\bv\in\AD$,
\eqref{eqn:TE} gives us
\[
\E(\bv)-\E(\bu)
= \int_\Omega
\big[\sigma\big(\bC_\bv\big)-\sigma\big(\bC_\bu\big)
-\bb\cdot\bw\big]\dd\bx
-
\int_\sS \bs\cdot\bw\,\dd\sH^{n-1}_\bx,
\]
\n where $\bw:=\bv-\bu\in W^{1,\infty}(\Omega;\R^N) \cap \Var$.
It follows that, when $\bu\e\in\AD$ is a solution of
the equilibrium equations, \eqref{eqn:EE},
we have the identity, for every $\bv\in\AD$,
\be\label{eqn:Identity-ES}
\E(\bv)-\E(\bu\e)
= \int_\Omega
\Big[\sigma\big(\bx,\bC_\bv(\bx)\big)-
\sigma\big(\bx,\bC\e(\bx)\big)
-\bS\big(\bx,\grad\bu\e(\bx)\big)
:\grad\bw(\bx)\Big]\dd\bx,
\ee
\n where $\bC\e:=\bC_{\bu\e}=(\grad\bu\e)^\rmT\grad\bu\e$,
$\bC_\bv:=(\grad\bv)^\rmT\grad\bv$, and $\bw:=\bv-\bu$.


\subsection{Multiaxial Tension}

In the sequel we shall assume that the second Piola-Kirchhoff stress
tensor $\bK$ is positive semidefinite at a given deformation
$\bv\in\AD$, that is, for $a.e.~\bx\in\Om$,
\be\label{eqn:stress-tensile}
\ba\cdot\bK\big(\bx,\bC_\bv(\bx)\big)\ba\ge 0\
\text{ for every $\ba\in\R^n$.}
\ee
\n In view of \eqref{eqn:CS} and the positivity of the Jacobian
$\det\grad\bv$, inequality \eqref{eqn:stress-tensile} is
essentially the same as the
assumption that the Cauchy stress tensor $\bT$ is positive semidefinite.
Thus, \eqref{eqn:stress-tensile} is the assumption that \emph{the
principal stresses in the deformed material are all tensile.}

The next result yields a simple consequence of
\eqref{eqn:stress-tensile}
that we shall use.  We sketch a proof
for the convenience of the reader.


\begin{lemma}\label{lem:L-posdef} Let $\bL\in L^p(\Om;\Symn)$ for some
$p\in[1,\infty]$.
Suppose that $\bL(\bx)$ is
positive semidefinite at almost every $\bx\in\Om$.  Then
\be\label{eqn:STEL}
I(\bw)=\int_\Om
\big(\big[\grad\bw(\bx)\big]^\rmT\grad\bw(\bx)\big):\bL(\bx)
\,\dd\bx\ge0
\ee
\n for all $\bw\in W^{1,q}(\Om;\R^n)$, where
\be\label{eqn:p-q}
\frac1p+\frac2{q}=1.
\ee
\n Conversely, suppose that
$\bL\in C(\Om;\Symn)$ satisfies
\eqref{eqn:STEL} for all  $\bw\in W_0^{1,2}(\Om;\R^n)$.
Then $\bL(\bx)$ is
positive semidefinite at every $\bx\in\Om$.
\end{lemma}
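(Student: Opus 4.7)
For the forward implication, I would argue pointwise. For a.e.~$\bx$, the matrix $[\grad\bw(\bx)]^\rmT \grad\bw(\bx)$ is a Gram matrix and hence symmetric and positive semidefinite; by hypothesis $\bL(\bx)$ is also positive semidefinite. A quick spectral-decomposition argument shows that the Frobenius inner product $\bA : \bB = \tr(\bA\bB)$ of any two positive semidefinite symmetric matrices is nonnegative, so the integrand $[(\grad\bw)^\rmT\grad\bw] : \bL$ is nonnegative a.e.~on $\Om$. H\"older's inequality with exponents $p$ and $q/2$, whose reciprocals sum to one by \eqref{eqn:p-q}, then ensures integrability of the product $|\grad\bw|^2\,|\bL|$ and hence convergence of $I(\bw)$ to a nonnegative value.

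For the converse I would argue by contradiction. Suppose $\bL(\bx_0)$ fails to be positive semidefinite at some $\bx_0 \in \Om$, so there is a unit vector $\ba_0 \in \R^n$ with $\ba_0\cdot\bL(\bx_0)\ba_0 = -2\delta < 0$. Continuity of $\bL$ then furnishes a radius $r > 0$ with $\overline{B_{r}(\bx_0)} \subset \Om$ and $\ba_0\cdot\bL(\bx)\ba_0 \le -\delta$ throughout $\overline{B_r(\bx_0)}$. The first simplification is to restrict attention to test functions of the form $\bw = \phi\,\bee$, where $\phi$ is scalar and $\bee$ is a fixed unit vector in $\R^n$; then $(\grad\bw)^\rmT\grad\bw = \grad\phi\otimes\grad\phi$, so
\[
I(\phi\,\bee) = \int_\Om \grad\phi(\bx) \cdot \bL(\bx)\grad\phi(\bx)\,\dd\bx,
\]
and it suffices to exhibit a single $\phi \in C_c^\infty(B_r(\bx_0))$ for which this integral is negative.

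The main obstacle, as I see it, is that the naive choice $\phi(\bx) = \chi(\bx)\,\ba_0\cdot(\bx-\bx_0)$ for a standard cutoff $\chi$ produces a transition-layer error that scales in $r$ exactly like the interior gain, and therefore cannot be made to dominate by shrinking $r$. The standard remedy, which I would employ, is an oscillating test function: fix a bump $\zeta \in C_c^\infty(B_r(\bx_0))$ with $\zeta \not\equiv 0$ and set
\[
\phi_k(\bx) := \frac{1}{k}\,\zeta(\bx)\,\sin\bigl(k\,\ba_0\cdot(\bx-\bx_0)\bigr), \qquad k \in \mathbb{N}.
\]
Differentiating yields $\grad\phi_k = \zeta\,\ba_0\cos\bigl(k\,\ba_0\cdot(\bx-\bx_0)\bigr) + k^{-1}\sin\bigl(k\,\ba_0\cdot(\bx-\bx_0)\bigr)\,\grad\zeta$, whose second summand contributes only $O(1/k)$ to $I(\phi_k\,\bee)$. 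Expanding the leading square and invoking the identity $\cos^2\theta = \tfrac12(1+\cos 2\theta)$ together with the Riemann--Lebesgue lemma applied to the integrable function $\bx\mapsto\zeta(\bx)^2\,\ba_0\cdot\bL(\bx)\ba_0$, I would conclude
\[
I(\phi_k\,\bee) \;\longrightarrow\; \frac{1}{2}\int_{B_r(\bx_0)} \zeta(\bx)^2\,\ba_0\cdot\bL(\bx)\ba_0\,\dd\bx \;\le\; -\frac{\delta}{2}\int_{B_r(\bx_0)}\zeta^2\,\dd\bx \;<\; 0
\]
as $k\to\infty$. Hence $I(\phi_k\,\bee) < 0$ for all sufficiently large $k$, contradicting the hypothesis that $I \ge 0$ on $W_0^{1,2}(\Om;\R^n)$ and completing the proof.
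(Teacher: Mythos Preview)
Your forward implication is correct and essentially identical to the paper's: both use the spectral decomposition of $\bL(\bx)$ to see that the integrand is nonnegative pointwise, and H\"older with exponents $p$ and $q/2$ to guarantee integrability.

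Your converse is also correct, but the route differs from the paper's. The paper observes that $I(\bw)\ge 0 = I(\mathbf{0})$ for all $\bw\in W_0^{1,2}$ is exactly the statement that the integrand is quasiconvex at $\mathbf{0}$, and invokes the classical result (Meyers~\cite{Me65}, Ball--Marsden~\cite{BM84}) that this allows one to freeze the coefficient: for each $\bx_0\in\Om$ one has $\int_B \grad\phi\cdot\bL(\bx_0)\grad\phi\,\dd\bz\ge0$ for all $\phi\in W_0^{1,2}(B)$, from which positive semidefiniteness of $\bL(\bx_0)$ follows by the standard pointwise argument (Truesdell--Noll, Dacorogna). You instead carry out the freezing by hand, via the oscillating test functions $\phi_k=k^{-1}\zeta\sin(k\,\ba_0\cdot(\bx-\bx_0))$ and the Riemann--Lebesgue lemma. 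Your argument is more self-contained and avoids citing the quasiconvexity literature; the paper's is shorter because it outsources precisely this oscillation-and-limit step to that literature. The two are really the same idea at different levels of abstraction: the quasiconvexity references ultimately rest on a construction of exactly the type you wrote down.
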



\begin{proof}  Fix $p\in[1,\infty]$. Let $\bL\in L^p(\Om;\Symn)$
with $\bL(\bx)$  positive semidefinite at $a.e.~\bx\in\Om$.
Then, by the spectral theorem, at $a.e.~\bx\in\Om$
there exists an orthonormal basis
$\bee_k(\bx)$ and scalars $\alpha_k(\bx)$, $k=1,2,\ldots,n$, with
$\alpha_k\ge0~a.e.$, such that
\be\label{eqn:spectral}
\bL(\bx)=\sum_{k=1}^n \alpha_k(\bx)\bee_k(\bx)\otimes\bee_k(\bx),
\ee
\n where $\ba\otimes\bb\in\Mn$ is defined by
$[\ba\otimes\bb]\bc=(\bb\cdot\bc)\ba$ for every $\bc\in\R^n$.

Let $\bw\in W^{1,q}(\Om;\R^n)$, where $q$ satisfies \eqref{eqn:p-q}.
Then \eqref{eqn:spectral} yields, with the aid of the inequalities
$\alpha_k\ge0~a.e.$,
\be\label{eqn:to-integrate}
\big(\big[\grad\bw\big]^\rmT\grad\bw\big):\bL
=
\sum_{k=1}^n \alpha_k\big|(\grad\bw)\bee_k\big|^2\ge 0~a.e.
\ee
\n Since $[\grad\bw]^\rmT\grad\bw\in L^{q/2}(\Om;\Symn)$ and
$\bL\in L^p(\Om;\Symn)$,  equation \eqref{eqn:p-q}
implies that \eqref{eqn:to-integrate} is integrable.
Thus, we may integrate
\eqref{eqn:to-integrate} over $\Om$ to arrive at
\eqref{eqn:STEL}.


Conversely, suppose that $\bL\in C(\Om;\Symn)$ satisfies \eqref{eqn:STEL}
for all  $\bw\in W_0^{1,2}(\Om;\R^n)$.
Note that \eqref{eqn:STEL} is the condition that $I$ assumes its infimum
at $\bw=\mathbf{0}$.  A standard result (see, e.g., \cite{Me65} or
\cite[Theorem~2.2(i)]{BM84}) is that $I$ is then
quasiconvex at $\bw=\mathbf{0}$; thus, for every $\bx\oo\in\Om$ and
$\widehat\bw\in W_0^{1,2}(B;\R^n)$,
\be\label{eqn:I-hat}
\widehat I (\widehat\bw)=
\int_B
\big(\big[\grad\widehat\bw(\bz)\big]^\rmT\grad\widehat\bw(\bz)\big):\bL(\bx\oo)
\,\dd\bz\ge0,
\ee
\n where $B\subset\R^n$ denotes the unit ball centered at $\mathbf{0}$.
In particular, fix $\bee\in\R^n$ with $|\bee|=1$ and let
$\widehat\bw=\phi\;\!\bee$, where $\phi\in W_0^{1,2}(B)$.
We then find, with the aid of the spectral theorem
(see \eqref{eqn:spectral}), that \eqref{eqn:I-hat} reduces to
\[
0\le\widehat I (\phi\;\!\bee)=
\int_B
\grad\phi\cdot\bL(\bx\oo)\grad\phi\,\dd\bz
=
\sum_{k=1}^n \alpha_k\int_B|\grad\phi\cdot\bee_k|^2\dd\bz,
\]
\n for all $\phi\in W_0^{1,2}(B)$.  The nonnegativity of the constant
eigenvalues $\alpha_k$, which yields $\bL(\bx\oo)$ positive semidefinite,
now follows from an appropriate
choice of $\phi$ (see, e.g., Truesdell \& Noll~\cite[\S68bis]{TN65} or
Dacorogna~\cite[p.~84]{Da08}).
\end{proof}
%
%

\subsection{The Elasticity Tensor} If the
elasticity tensor is uniformly positive definite at a deformation
$\bu\in\Def\cap\, W^{1,\infty}(\Om;\R^n)$, i.e.,
\be\label{eqn:ET-at-I-PD-2}
\bM:\CC\big(\bx,\bC_\bu(\bx)\big)[\bM]\ge 2\beta |\bM|^2,
\ee
\n for some $\beta>0$,  every $\bM\in\Symn$, and $a.e.~\bx\in\Omega$,
then the choice $\bM=\bB(\bx)$
together with an integration of
\eqref{eqn:ET-at-I-PD-2} yields
\be\label{eqn:SV-P2}
\int_\Omega \bB(\bx):\CC\big(\bx,\bC_\bu(\bx)\big)
\big[\bB(\bx)\big]\,\dd\bx
\ge
2\beta\int_\Omega |\bB(\bx)|^2\dd\bx.
\ee


\begin{lemma}\label{lem:PSV}  Let $\sigma$ satisfy (1)--(3) of
Hypothesis~\ref{def:W}.  Suppose that $\bu\in\AD$ satisfies
\eqref{eqn:SV-P2}, for some $\beta>0$ and all $\bB\in L^2(\Om;\Symn)$.
Moreover, assume that
\be\label{eqn:small+SV-in-Lem}
\bC_\bu(\bx)\in\sB \text{ for $a.e.~\bx\in \Omega$},
\ee
\n where $\sB$ is a nonempty, bounded, open set
with $\overline{\sB}\subset\Psymn$ and
$\bC_\bu:=(\grad\bu)^\rmT\grad\bu$.
Then there exists
an $\varepsilon>0$ such that
any $\bv\in\AD$ that satisfies, for $a.e.~\bx\in \Omega$,
\be\label{eqn:small+B-in-Lem}
\bC_\bv(\bx)\in\sB,
\qquad
\tsn{\bC_\bv-\bC_\bu}_{\BMO(\Omega)}
+
\Big|\dashint_\Omega (\bC_\bv-\bC_\bu)\,\dd\bx\Big|<\varepsilon,
\ee
\n $\bC_\bv:=(\grad\bv)^\rmT\grad\bv$, will also satisfy
\be\label{eqn:small+SV-in-Lem-2}
\int_\Omega \bE(\bx):\CC\big(\bx,\bC_\bv(\bx)\big)
\big[\bE(\bx)\big]\,\dd\bx
\ge
\beta\int_\Omega \big|\bE(\bx)\big|^2\,\dd\bx,
\quad \bE:=\bC_\bv-\bC_\bu.
\ee
\end{lemma}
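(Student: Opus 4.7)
The plan is to compare the second variation at $\bv$ to that at $\bu$ by writing
\[
\int_\Omega \bE:\CC(\bx,\bC_\bv)[\bE]\,\dd\bx
= \int_\Omega \bE:\CC(\bx,\bC_\bu)[\bE]\,\dd\bx
+ \int_\Omega \bE:\big(\CC(\bx,\bC_\bv)-\CC(\bx,\bC_\bu)\big)[\bE]\,\dd\bx.
\]
The first term I would bound below by $2\beta\int_\Omega|\bE|^2\,\dd\bx$, directly from hypothesis \eqref{eqn:SV-P2} applied with $\bB:=\bE\in L^2(\Omega;\Symn)$ (noting $\bC_\bv,\bC_\bu\in L^\infty$ since $\bu,\bv\in\AD$, so $\bE\in L^\infty\subset L^2$). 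The remaining task is to show that the perturbative second term is bounded below by $-\beta\int_\Omega|\bE|^2\,\dd\bx$ once $\varepsilon$ is small enough; summing then yields \eqref{eqn:small+SV-in-Lem-2}.

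The first ingredient is a pointwise Lipschitz estimate for $\bC\mapsto\CC(\bx,\bC)$. Since $\overline{\sB}\subset\Psymn$ is compact and $\Psymn$ is open and convex, I can choose a convex open set $\sB'$ with $\overline{\sB}\subset\sB'$ and $\overline{\sB'}\subset\Psymn$; by \eqref{eqn:small+SV-in-Lem} and the first inequality in \eqref{eqn:small+B-in-Lem}, the segment joining $\bC_\bu(\bx)$ and $\bC_\bv(\bx)$ lies in $\sB'$ for a.e.~$\bx$. Part~(3) of Hypothesis~\ref{def:W} applied to $\DD^3\sigma$ gives a uniform bound on $\partial\CC/\partial\bC$ over $\overline{\Omega}\times\overline{\sB'}$, and integrating along this segment yields a constant $L=L(\sB',\sigma)$ with
\[
\big|\CC(\bx,\bC_\bv(\bx))-\CC(\bx,\bC_\bu(\bx))\big|\le L\,|\bE(\bx)|
\quad\text{for a.e. }\bx\in\Omega.
\]
Consequently the absolute value of the perturbative term is at most $L\int_\Omega|\bE|^3\,\dd\bx$.

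The second ingredient is BMO interpolation: inequality \eqref{eqn:RH} in Proposition~\ref{thm:main-2}, with $p=2$ and $q=3$, gives (after cubing)
\[
\int_\Omega|\bE|^3\,\dd\bx
\le J_2^3\,\|\bE\|_{\BMO(\Omega)}\int_\Omega|\bE|^2\,\dd\bx
\le J_2^3\,\varepsilon\int_\Omega|\bE|^2\,\dd\bx,
\]
where the last step uses the definition \eqref{eqn:BMO-Om-norm} together with the smallness assumption in \eqref{eqn:small+B-in-Lem}. Combining, the perturbative term is at least $-LJ_2^3\varepsilon\int_\Omega|\bE|^2\,\dd\bx$, and the choice $\varepsilon:=\beta/(LJ_2^3)$ completes the proof.

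The main obstacle I anticipate is the Lipschitz step: one needs the line segment from $\bC_\bu(\bx)$ to $\bC_\bv(\bx)$ to stay uniformly away from the boundary of $\Psymn$ so that $\DD^3\sigma$ is genuinely bounded along it. The compact containment $\overline{\sB}\subset\Psymn$, together with the convexity of $\Psymn$, is precisely what allows the enlargement to $\sB'$ and thus makes the cubic remainder controllable by the BMO norm of $\bE$; without this, smallness of $\|\bC_\bv-\bC_\bu\|_{\BMO(\Omega)}$ alone would not suffice.
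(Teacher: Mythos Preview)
Your proof is correct and follows essentially the same approach as the paper: the paper packages your pointwise Lipschitz estimate as the second inequality of Lemma~\ref{lem:taylor-III} (Taylor's theorem applied to $\DD^2\sigma$ on $\overline{\sB}$, which implicitly uses exactly the convex-hull argument you spell out), then integrates and invokes the interpolation inequality \eqref{eqn:RH} with $p=2$, $q=3$ just as you do. The only cosmetic difference is that the paper applies \eqref{eqn:RH} componentwise to each $E_{ij}$ rather than to the scalar $|\bE|$.
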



Here and in the sequel, we use the notation
$\tsn{\bC_\bv-\bC_\bu}_{\BMO(\Omega)}$
to denote the $\BMO$-seminorm of the tensor $\bC_\bv-\bC_\bu$.
The definition is precisely as in \eqref{eqn:BMO-V-norm},
except one has the tensor in place of $\psi$ and the
Euclidean norm in place of the absolute value in the integral.

\begin{proof}[Proof of Lemma~\ref{lem:PSV}]
For clarity of exposition, we suppress the variable $\bx$.
Let $\bu\in\AD$ satisfy \eqref{eqn:SV-P2} and
 \eqref{eqn:small+SV-in-Lem} for
all $\bB\in L^2(\Om;\Symn)$.  Suppose that
$\bv\in\AD$ satisfies \eqref{eqn:small+B-in-Lem}
for some $\varepsilon>0$ to be determined and
define $\bE:=\bC_\bv-\bC_\bu$.
Then, Lemma~\ref{lem:taylor-III} with $\bV=\bC_\bv$, $\bU=\bC_\bu$,
and $\bL=\bC_\bv-\bC_\bu=\bE$  yields a constant
$\widehat{c}=\widehat{c}(\sB)>0$ such that,
for $a.e.~\bx\in \Omega$,
\be\label{eqn:taylor-app-in-Lem}
\bE:\CC\big(\bC_\bv\big)\big[\bE\big]
\ge \bE:\CC\big(\bC_\bu\big)
\big[\bE\big] - \widehat{c}|\bE|^3.
\ee


 If we now integrate
\eqref{eqn:taylor-app-in-Lem} over $\Omega$ and make use of
\eqref{eqn:SV-P2}  we find that
\be\label{eqn:E-taylor-in-Lem}
\int_\Omega
\bE:\CC\big(\bC_\bv\big)\big[\bE\big]\,\dd\bx
\ge
2\beta\int_\Omega|\bE|^2\,\dd\bx
-\widehat{c}\int_\Omega|\bE|^3\,\dd\bx.
\ee
\n We next note that inequality \eqref{eqn:RH} (with $q=3$ and $p=2$) of
Proposition~\ref{thm:main-2} yields a constant $J>0$ such
that, for the given $\bE=\bC_\bv-\bC_\bu$ that satisfies
\eqref{eqn:small+B-in-Lem}$_{2}$ and every $i,j\in \{1,\ldots, n\}$,
\be\label{eqn:quadratic>cubic-in-Lem}
\varepsilon J^3\int_\Omega
\big|E_{ij}\big|^2\dd\bx
\ge
\int_\Omega
\big|E_{ij}\big|^3\dd\bx.
\ee
\n Thus one deduces \eqref{eqn:small+SV-in-Lem-2} as a consequence of
\eqref{eqn:E-taylor-in-Lem} and \eqref{eqn:quadratic>cubic-in-Lem}
when $\varepsilon$ is sufficiently small.
\end{proof}


Finally, for future reference, we note that the uniform positivity of the
elasticity tensor is preserved under perturbations that are small in
 the space of strains.
We give a proof of this elementary result for the convenience of the reader.


\begin{lemma}\label{lem:temp-re-D2sigma-moved}
Let $\sigma$ satisfy
(1)--(3) of Hypothesis~\ref{def:W}.   Suppose that,
for some  $\bC\oo\in \Psymn$  and $\beta>0$,
\be\label{eqn:ET-at-I-PD-again-moved}
\bM:\CC(\bx,\bC\oo)[\bM]\ge 2\beta |\bM|^2,
\ee
\n for every $\bM\in\Symn$ and $a.e.~\bx\in\Omega$.
Then there exists an $\omega\oo\in (0,|\bC\oo|)$
such that any $\bC\in\Psymn$ that satisfies
\be\label{eqn:C-near-I-0-moved}
\big|\bC-\bC\oo\big|<\omega\oo
\ee
\n will also satisfy, for all $\bM\in\Symn$ and $a.e.~\bx\in\Om$,
\be\label{eqn:pos-A-sigma-moved}
\bM:\CC\big(\bx,\bC\big)[\bM]\ge \beta|\bM|^2.
\ee
\end{lemma}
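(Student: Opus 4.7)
The plan is to exploit the $C^3$-regularity of $\bC\mapsto \sigma(\bx,\bC)$ together with the uniform bound on $\DD^3\sigma$ guaranteed by Hypothesis~\ref{def:W}, so that $\CC(\bx,\cdot)=4\DD^2\sigma(\bx,\cdot)$ is locally Lipschitz in $\bC$ with a constant that is uniform in $\bx\in\Omc$. A standard perturbation argument then shows that if $\bC$ is sufficiently close to $\bC\oo$, then the quadratic form $\bM:\CC(\bx,\bC)[\bM]$ differs from $\bM:\CC(\bx,\bC\oo)[\bM]$ by at most $\beta|\bM|^2$, which yields \eqref{eqn:pos-A-sigma-moved}.

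More concretely, I would first use that $\Psymn\subset\Symn$ is open and $\bC\oo\in\Psymn$ to pick $\omega_1\in(0,|\bC\oo|)$ small enough that the closed ball
\[
K:=\{\bC\in\Symn:|\bC-\bC\oo|\le\omega_1\}
\]
is a compact subset of $\Psymn$. Hypothesis~\ref{def:W}(3) with $k=3$ then furnishes a constant $M=M(K)$ such that
\[
|\DD^3\sigma(\bx,\bC)|\le M
\quad\text{for all }\bC\in K\text{ and $a.e.$ }\bx\in\Omega.
\]
By the mean value inequality applied to $\bC\mapsto \CC(\bx,\bC)=4\DD^2\sigma(\bx,\bC)$ along the segment from $\bC\oo$ to $\bC\in K$ (which lies in $K$ by convexity), this yields the Lipschitz estimate
\[
\bigl|\CC(\bx,\bC)-\CC(\bx,\bC\oo)\bigr|\le 4M\,|\bC-\bC\oo|,
\qquad a.e.\ \bx\in\Omega.
\]

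Next, for any $\bM\in\Symn$ and any such $\bC$, I would write
\[
\bM:\CC(\bx,\bC)[\bM]=\bM:\CC(\bx,\bC\oo)[\bM]+\bM:\bigl(\CC(\bx,\bC)-\CC(\bx,\bC\oo)\bigr)[\bM],
\]
and use the hypothesis \eqref{eqn:ET-at-I-PD-again-moved} together with Cauchy--Schwarz (for the inner product on $\Mn$) to estimate
\[
\bM:\CC(\bx,\bC)[\bM]\ge 2\beta|\bM|^2-4M\,|\bC-\bC\oo|\,|\bM|^2.
\]
Finally, setting
\[
\omega\oo:=\min\!\left\{\tfrac{\omega_1}{2},\ \tfrac{\beta}{4M},\ \tfrac{|\bC\oo|}{2}\right\}\in(0,|\bC\oo|)
\]
ensures that $|\bC-\bC\oo|<\omega\oo$ implies $4M|\bC-\bC\oo|<\beta$, and hence $\bM:\CC(\bx,\bC)[\bM]\ge\beta|\bM|^2$, as required.

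There is no real obstacle here: the argument is a straightforward continuity/perturbation calculation. The only point requiring a small amount of care is ensuring that the ball on which one applies the mean value inequality is \emph{compactly} contained in $\Psymn$, so that Hypothesis~\ref{def:W}(3) supplies a uniform bound on $\DD^3\sigma$ over $\Omc\times K$; this is what converts the pointwise smoothness of $\sigma$ in $\bC$ into a Lipschitz estimate that is uniform in the material point $\bx$.
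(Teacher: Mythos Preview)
Your argument is correct and follows essentially the same route as the paper: both proofs use a uniform Lipschitz estimate for $\bC\mapsto\CC(\bx,\bC)$ on a compact ball about $\bC\oo$ and then choose $\omega\oo$ so that the perturbation term is at most $\beta|\bM|^2$. The only cosmetic difference is that the paper invokes the second inequality of Lemma~\ref{lem:taylor-III} (with $\bV=\bC$, $\bU=\bC\oo$, $\bL=\bM$, and $\sB$ the open ball of radius $|\bC\oo|/2$) to obtain $\bM:\CC(\bx,\bC)[\bM]\ge\bM:\CC(\bx,\bC\oo)[\bM]-\widehat c\,|\bC-\bC\oo|\,|\bM|^2$, whereas you derive this estimate directly from the mean value inequality and the bound on $\DD^3\sigma$; your explicit choice of $\omega_1$ to ensure $K\subset\Psymn$ is in fact slightly more careful than the paper's fixed radius $|\bC\oo|/2$.
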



\begin{proof}  Assume $\sigma$, $\CC$, $\bC\oo$, and $\beta$
 satisfy the hypotheses of
the Lemma. Define $\sB\subset\Psymn$
by
\[
\sB:=\{\bE\in\Psymn:|\bE-\bC\oo|<|\bC\oo|/2\}.
\]
\n Then Lemma~\ref{lem:taylor-III}, with $\bV=\bC$, $\bU=\bC\oo$, and
$\bL=\bM$, yields a constant $\widehat{c}>0$ such that, for $a.e.~\bx\in\Om$
and every  $\bM\in\Symn$,
\be\label{eqn:Taylor-A-sigma-moved}
\bM:\CC(\bx,\bC)[\bM]
\ge
\bM:\CC(\bx,\bC\oo)[\bM]
-\widehat{c}|\bM|^2|\bC-\bC\oo|.
\ee
\n The desired result, \eqref{eqn:pos-A-sigma-moved}, now follows from
\eqref{eqn:ET-at-I-PD-again-moved}, \eqref{eqn:C-near-I-0-moved}, and
\eqref{eqn:Taylor-A-sigma-moved} when $\omega\oo\le \beta/\widehat{c}$.
\end{proof}


\section{Uniqueness in \texorpdfstring{$\BMO\cap\, L^1$}{BMO}
Neighborhoods}\label{sec:unique-BMO+L1}

The first result of this section yields a comparison of the energy of an
equilibrium solution, $\bue$, to the energy of any admissible deformation
whose strains are sufficiently close to $\bC\e$ in $\BMO\cap\,L^1$.
Our main theorem then follows from this energy estimate.
It establishes that, given a solution $\bue$ of the
equilibrium equations whose principal stresses are positive
(or a smooth solution whose principal stresses are sufficiently small
and negative) and where the
integral of the elasticity tensor is uniformly positive, there is a
neighborhood of $\bC\e$ in $\Psymn$ in the $\BMO\cap\,L^1$-topology
in which there
are no other solutions of the equilibrium equations.


\begin{lemma}\label{lem:main-1} Let
$\sigma:\Omc\times\Psymn\to[0,\infty)$ satisfy (1)--(3) of
Hypothesis~\ref{def:W}.    Suppose that $\bu\e\in\AD$ is
a weak solution of the equilibrium equations, (\ref{eqn:EE}),
that satisfies, for some $k>0$,
some $\epsilon\in (0,1)$,
every $\bD\in L^2(\Om;\Symn)$,
and almost every $\bx\in\Om$,
\begin{gather}\label{eqn:CC-pos-def}
\int_\Omega \bD(\bx):\CC\big(\bx,\bC\e(\bx)\big)
\big[\bD(\bx)\big]\,\dd\bx
\ge
16k\int_\Omega |\bD(\bx)|^2\dd\bx,
\qquad \det\grad\bu\e(\bx)> \epsilon,
\end{gather}
\n where $\bC\e=\bC_{\bu\e}:=(\grad\bu\e)^\rmT\grad\bu\e$.
Fix $X\in\R$ with
$X>||\bC\e||_{\infty,\Om}$ and $X^{\mi1}<\epsilon$.  Then there
exists a $\altd=\altd(X)>0$ such that any $\bv\in\AD$ that
satisfies
\be\label{eqn:small+e-t-new-01}
\begin{gathered}
\tsn{\bC_\bv
-\bC\e}_{\BMO(\Omega)}
+
\Big|\dashint_\Omega \big[\bC_\bv -\bC\e\big]\,\dd\bx\Big|<\altd,
\\[4pt]
\|\bC_\bv\|_{\infty,\Om} <X,
\qquad
  \det\grad\bv>X^{\mi1}~a.e.
\end{gathered}
\ee
\n with
$\bC_\bv:=(\grad\bv)^\rmT\grad\bv$,
\n will also satisfy
\be\label{eqn:E-sigma-bound-zero-03}
\E(\bv)\ge \E(\bu\e) + k\int_\Omega|\bC_\bv-\bC\e|^2\dd\bx
+  \tfrac12\int_\Omega
\bK\big(\bx,\bC\e(\bx)\big):\bH^\rmT\bH\,\dd\bx,
\ee
\n where $\bH:=\grad\bv-\grad\bue$.
Moreover, if in addition $\bv=\bv\e$ is
a weak solution of the equilibrium equations, then
\be\label{eqn:E-sigma-bound-zero-04}
\E(\bu\e)\ge \E(\bv\e)
+ \frac{k}{2}\int_\Omega|\bC_{\bv\e}-\bC\e|^2\dd\bx
+\tfrac12 \int_\Omega
\bK\big(\bx,\bC_{\bv\e}(\bx)\big):\bH\e^\rmT\bH\e\,\dd\bx,
\ee
\n where $\bH\e:=\grad\bv\e-\grad\bue$.
\end{lemma}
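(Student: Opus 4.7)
The plan is to start from the equilibrium identity \eqref{eqn:Identity-ES},
\[
\E(\bv)-\E(\bu\e)=\int_\Om\bigl[\sigma(\bx,\bC_\bv)-\sigma(\bx,\bC\e)-\bS(\bx,\grad\bue):\grad\bw\bigr]\dd\bx,
\qquad\bw:=\bv-\bu\e,
\]
Taylor-expand $\bC\mapsto\sigma(\bx,\bC)$ around $\bC\e(\bx)$ to second order with a cubic remainder, and then use the $\BMO$-smallness of $\bC_\bv-\bC\e$ to absorb the remainder into the quadratic term. The first preparation is to check that all strains lie in a fixed compact subset of $\Psymn$ so that the cubic remainder is uniformly controlled: the hypothesis $\|\bC_\bv\|_{\infty,\Om}<X$ in \eqref{eqn:small+e-t-new-01} together with $(\det\grad\bv)^2=\det\bC_\bv>X^{-2}$ forces every eigenvalue of $\bC_\bv(\bx)$ to lie in $[X^{-n-1},X]$; the same bounds hold for $\bC\e$ in view of $X>\|\bC\e\|_{\infty,\Om}$ and $X^{-1}<\epsilon<\det\grad\bue$ from \eqref{eqn:CC-pos-def}. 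Hence both strains take values in a nonempty convex compact subset $\overline{\sB}\subset\Psymn$, and Hypothesis~\ref{def:W}(3) supplies a constant $\widehat c=\widehat c(X)$ with pointwise remainder $|R|\le\widehat c|\bC_\bv-\bC\e|^3$.

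Writing $\bH:=\grad\bv-\grad\bue$ so that $\bC_\bv-\bC\e=\bE+\bH^\rmT\bH$ with $\bE:=(\grad\bue)^\rmT\bH+\bH^\rmT\grad\bue$, and using $\bK=2\DD\sigma$, $\CC=4\DD^2\sigma$ together with the elementary identity $\bS(\grad\bue):\grad\bw=\tfrac12\bK(\bC\e):\bE$ (a direct consequence of \eqref{eqn:fPKS} and the symmetry of $\bK$), the linear part of the Taylor expansion cancels against the Piola-Kirchhoff term, leaving the pointwise identity
\[
\sigma(\bC_\bv)-\sigma(\bC\e)-\bS(\grad\bue):\grad\bw
=\tfrac12\bK(\bC\e):(\bH^\rmT\bH)+\tfrac18(\bC_\bv-\bC\e):\CC(\bC\e)[\bC_\bv-\bC\e]+R.
\]
Integrating and applying \eqref{eqn:CC-pos-def} with $\bD:=\bC_\bv-\bC\e$ produces the quadratic lower bound $\tfrac18\cdot 16k\int|\bC_\bv-\bC\e|^2=2k\int|\bC_\bv-\bC\e|^2$.

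The heart of the argument is then absorbing the cubic remainder $\widehat c\int|\bC_\bv-\bC\e|^3$ into this quadratic floor. For this I invoke the interpolation inequality \eqref{eqn:RH} in Proposition~\ref{thm:main-2} with $p=2$, $q=3$, applied componentwise to $\bC_\bv-\bC\e$ and combined with \eqref{eqn:BMO-Om-norm}, which yields a constant $C=C(J_2,n)$ such that
\[
\int_\Om|\bC_\bv-\bC\e|^3\,\dd\bx\le C\|\bC_\bv-\bC\e\|_{\BMO(\Om)}\int_\Om|\bC_\bv-\bC\e|^2\,\dd\bx.
\]
Hypothesis \eqref{eqn:small+e-t-new-01} keeps the $\BMO$-norm on the right below $\altd$, so choosing $\altd\le k/(\widehat c\,C)$ converts the cubic remainder into at most $k\int|\bC_\bv-\bC\e|^2$; subtracting from the previously derived $2k\int|\bC_\bv-\bC\e|^2$ leaves the clean bound $k\int|\bC_\bv-\bC\e|^2$ and produces \eqref{eqn:E-sigma-bound-zero-03}.

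For \eqref{eqn:E-sigma-bound-zero-04} I rerun the same argument with $\bu\e$ and $\bv\e$ interchanged, Taylor expanding $\sigma$ around $\bC_{\bv\e}$ instead; the one new obstacle is that \eqref{eqn:CC-pos-def} supplies positivity of $\CC$ at $\bC\e$ but not at $\bC_{\bv\e}$. This gap is bridged precisely by Lemma~\ref{lem:PSV}: the $\BMO$-smallness of $\bC_{\bv\e}-\bC\e$ in \eqref{eqn:small+e-t-new-01} transports \eqref{eqn:CC-pos-def} (with $2\beta=16k$) to $\bC_{\bv\e}$ with the halved constant $\beta=8k$, so $\int\bE:\CC(\bC_{\bv\e})[\bE]\ge 8k\int|\bE|^2$ for $\bE:=\bC_{\bv\e}-\bC\e$. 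The Taylor-plus-interpolation step then yields a quadratic floor of $k\int|\bC_{\bv\e}-\bC\e|^2$ rather than $2k\int|\bC_{\bv\e}-\bC\e|^2$, and after absorbing the cubic remainder (possibly shrinking $\altd$ once more) one is left with the advertised $\tfrac{k}{2}\int|\bC_{\bv\e}-\bC\e|^2$. The only genuine difficulty is the bookkeeping: one must choose a single $\altd=\altd(X)$ that simultaneously places $\bC_{\bv\e}$ in the Lemma~\ref{lem:PSV} window and makes the cubic absorption work in both expansions.
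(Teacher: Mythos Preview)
Your proof is correct and follows essentially the same route as the paper's: define a compact strain window via the bounds $|\bC|<X$ and $\det\bC>X^{-2}$, Taylor-expand $\sigma$ about $\bC\e$ using Lemma~\ref{lem:taylor-III}, split the linear term via $\bC_\bv-\bC\e=(\grad\bue)^\rmT\bH+\bH^\rmT\grad\bue+\bH^\rmT\bH$ so that the equilibrium identity \eqref{eqn:Identity-ES} cancels the Piola--Kirchhoff part, apply \eqref{eqn:CC-pos-def} to the quadratic term, and absorb the cubic remainder using the interpolation inequality \eqref{eqn:RH} componentwise. Your treatment of \eqref{eqn:E-sigma-bound-zero-04} via role-reversal plus Lemma~\ref{lem:PSV} (to transport the $16k$ positivity at $\bC\e$ to $8k$ positivity at $\bC_{\bv\e}$, hence the halved constant $k/2$) is exactly what the paper does.
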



\begin{theorem}\label{thm:new-final-1}  Let
$\sigma:\Omc\times\Psymn\to[0,\infty)$ satisfy (1)--(3) of
Hypothesis~\ref{def:W}.   Suppose that $\bu\e\in\AD$ is
a weak solution of the equilibrium equations that satisfies
(\ref{eqn:CC-pos-def}) and, for some $\kv\in\R$, every $\ba\in\R^n$,
and almost every $\bx\in\Om$,
\be\label{eqn:K-pos-def-at-ue-in-lem}
\ba\cdot\bK\big(\bx,\bC\e(\bx)\big)\ba\ge 2\ku |\ba|^2.
\ee
\n Assume in addition that either
\begin{enumerate}
\item[(a)] $\ku\ge0$; or
\item[(b)] $\bue\in C^1(\Omc;\R^n)$ and $\ku\ge-k C_M$,
where $C_M$ is given by Proposition~\ref{prop:CM15}.
\end{enumerate}
\n Fix $X\in\R$ with
$X>||\bC\e||_{\infty,\Om}$ and
$X^{\mi1}<\epsilon$  (see (\ref{eqn:CC-pos-def})).  Then there
exists a $\altd=\altd(X)>0$ such that any $\bv\in\AD$ that satisfies
(\ref{eqn:small+e-t-new-01})
will have strictly greater energy that $\bue$.  Moreover, if
$\bue$ and $\bv$ also satisfy, for some $\kv\in\R$,
every $\ba\in\R^n$, and almost every $\bx\in\Om$,
\[
\ba\cdot\Big[\bK\big(\bx,\bC\e(\bx)\big)+
\bK\big(\bx,\bC_{\bv}(\bx)\big)\Big]\ba\ge 2\kv |\ba|^2
\]
\n with either
\begin{enumerate}
\item[(i)] $\kv\ge0$; or
\item[(ii)] $2\kv\ge-3kC_M$ and at least one of
$\bue$ and $\bv$  is contained in  $C^1(\Omc;\R^n)$,
\end{enumerate}
\n then $\bv$ cannot be a weak solution of the equilibrium equations.
In particular if, for $a.e.~\bx\in\Om$,
$\bK(\bx,\bC\e(\bx))$ and $\bK(\bx,\bC_{\bv}(\bx))$
are positive semidefinite, then $\bv$ cannot be a
weak solution of the equilibrium equations.
\end{theorem}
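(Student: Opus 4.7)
The plan is to derive both conclusions directly from Lemma~\ref{lem:main-1}. Applying \eqref{eqn:E-sigma-bound-zero-03} together with the pointwise bound $\bK\bigl(\bx,\bC\e(\bx)\bigr):\bigl(\bH^\rmT\bH\bigr) \geq 2\ku|\bH|^2$ (which follows from \eqref{eqn:K-pos-def-at-ue-in-lem} by the spectral argument used in the proof of Lemma~\ref{lem:L-posdef}) yields
\[
\E(\bv) - \E(\bu\e) \;\geq\; k\int_\Om |\bC_\bv - \bC\e|^2\,\dd\bx \;+\; \ku \int_\Om |\bH|^2\,\dd\bx,
\]
where $\bH := \grad\bv - \grad\bu\e$. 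In case (a) both terms are nonnegative; the first is strictly positive unless $\bC_\bv = \bC\e$ a.e., in which case Proposition~\ref{prop:Lo13} produces a rotation $\bR \in \SOn$ with $\grad\bv = \bR\grad\bu\e$, and the Dirichlet condition $\bv = \bu\e$ on the nonempty, relatively open $\sD$ forces $\bR = \bI$ and hence $\bv = \bu\e$. In case (b), since $\bu\e \in C^1(\Omc;\R^n)$, Proposition~\ref{prop:CM15} with $p = q = 2$ supplies
\[
\int_\Om |\bC_\bv - \bC\e|^2\,\dd\bx \;\geq\; C_M\bigl(\|\bv-\bu\e\|_{2,\Om}^2 + \|\bH\|_{2,\Om}^2\bigr);
\]
inserting the $\|\bH\|_{2,\Om}^2$ portion to absorb the (possibly negative) $\ku$-term, which is possible because $\ku + kC_M \geq 0$, leaves a residual $kC_M\|\bv-\bu\e\|_{2,\Om}^2$ that is strictly positive whenever $\bv \neq \bu\e$, establishing the first assertion.

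For the uniqueness assertion, suppose for contradiction that $\bv = \bv\e$ is also a weak solution. Then Lemma~\ref{lem:main-1} also provides \eqref{eqn:E-sigma-bound-zero-04}, and summing it with \eqref{eqn:E-sigma-bound-zero-03} gives
\[
0 \;\geq\; \tfrac{3k}{2}\int_\Om |\bC_\bv - \bC\e|^2\,\dd\bx \;+\; \tfrac12\int_\Om \bigl[\bK(\bx,\bC\e) + \bK(\bx,\bC_\bv)\bigr] : \bH^\rmT\bH\,\dd\bx.
\]
The hypothesis on $\bK(\bC\e) + \bK(\bC_\bv)$, again combined with the spectral lower bound, controls the second integral by $\kv\int_\Om|\bH|^2\,\dd\bx$. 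In case (i) this immediately forces $\int_\Om|\bC_\bv - \bC\e|^2\,\dd\bx = 0$; Proposition~\ref{prop:Lo13} and the Dirichlet condition then yield $\bv = \bu\e$, a contradiction. In case (ii), Proposition~\ref{prop:CM15} (applied with whichever of $\bu\e,\bv$ is $C^1$ playing the role of the $C^1$ mapping in that proposition) converts the inequality into $0 \geq \tfrac{3kC_M}{2}\|\bv-\bu\e\|_{2,\Om}^2 + \bigl(\tfrac{3kC_M}{2} + \kv\bigr)\|\bH\|_{2,\Om}^2$, with both coefficients nonnegative by the saturation bound $2\kv \geq -3kC_M$, again forcing $\bv = \bu\e$. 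The positive-semidefinite special case is case (i) with $\kv = 0$.

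The main obstacle is the borderline behavior at $\ku = -kC_M$ (respectively $2\kv = -3kC_M$): the naive estimate $\|\bH\|_{2,\Om}^2 \leq C_M^{\mi1}\int_\Om|\bC_\bv - \bC\e|^2\,\dd\bx$ would yield only the vacuous $0 \geq 0$. The resolution is to retain the full $W^{1,2}$-form of Proposition~\ref{prop:CM15}, so that an extra $\|\bv-\bu\e\|_{2,\Om}^2$ term survives to close the argument. A secondary technical step is the reduction of $\bC_\bv = \bC\e$ together with the Dirichlet condition to the equality $\bv = \bu\e$: after Proposition~\ref{prop:Lo13} produces a constant rotation $\bR$, tangential differentiation along $\sD$ (or a direct Sobolev argument exploiting $\det\grad\bu\e > 0$) pins down $\bR = \bI$.
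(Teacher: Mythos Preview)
Your proposal is correct and follows essentially the same route as the paper's proof: invoke Lemma~\ref{lem:main-1} for the two energy inequalities, convert the $\bK$-term into $\ku\|\bH\|_{2,\Om}^2$ (resp.\ $\kv\|\bH\|_{2,\Om}^2$) via Lemma~\ref{lem:L-posdef}, then in the nonnegative cases appeal to Proposition~\ref{prop:Lo13} plus the Dirichlet condition on $\sD$, and in the negative cases absorb the deficit using Proposition~\ref{prop:CM15} with $p=q=2$, exploiting the full $W^{1,2}$-norm so that the $L^2$-piece survives at the borderline. The only cosmetic differences are that the paper multiplies the summed inequality by $2$ (writing $3k$ and $2\kv$ where you keep $\tfrac{3k}{2}$ and $\kv$), and that the paper dispatches the step $\bR=\bI$ more directly by integrating $\grad\bv=\bR\grad\bu\e$ over the connected $\Om$ to get $\bv=\bR\bu\e+\ba$ and then reading off $\bR=\bI$, $\ba=\mathbf{0}$ from $\bv=\bu\e$ on the relatively open set $\sD$, rather than via tangential differentiation.
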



\begin{remark}\label{rem:convexity-0}  1.~The inequality
\[
\bB:\CC\big(\bx,\bC\e(\bx)\big)[\bB]
\ge
 16k|\bB|^2,\  \text{ for all $\bB\in\Symn$ and $a.e.~\bx\in\Om$},
\]
\n implies that, for almost every $\bx\in\Om$, the function
$\bB\mapsto \sigma(\bx,\bB)$ is \emph{convex in a neighborhood of}
$\bC\e(\bx)$.
Gao, Neff, Roventa, \& Thiel~\cite{GNRT17} have shown that
the positivity of the principal stresses together with the assumption that,
for almost every $\bx\in\Om$, $\bC\e(\bx)$ is \emph{a (global)
point of convexity of} $\bB\mapsto \sigma(\bx,\bB)$, i.e.,
\[
\sigma(\bx,\bB)\ge \sigma\big(\bx,\bC\e(\bx)\big)
+\DD\sigma\big(\bx,\bC\e(\bx)\big):\big[\bB-\bC\e(\bx)\big]
\ \text{ for all   $\bB\in \Psymn$,}
\]
\n  implies that $\bu\e$ is an absolute minimizer of the energy.
2.~The conclusions of Lemma~\ref{lem:main-1} and
Theorem~\ref{thm:new-final-1} are valid under
slightly more general hypotheses.
It is clear that \eqref{eqn:CC-pos-def} need
not be satisfied by all $\bD\in L^2(\Om;\Symn)$, but only by all
$\bD$ that satisfy
\[
\bD=(\grad\bv)^\rmT\grad\bv-(\grad\bu)^\rmT\grad\bu
\]
\n for some $\bu,\bv\in\AD$.  For information on the
characterization of such mappings see, e.g.,  Blume~\cite{Bl89}
or  Ciarlet \&  Laurent~\cite{CL03}
and the references therein.
\end{remark}


\begin{proof}[Proof of Lemma~\ref{lem:main-1}] We suppress the
variable $\bx$ for clarity of exposition.
Let $\bu\e\in\AD$ be a weak solution of the equilibrium equations
 that  satisfies \eqref{eqn:CC-pos-def}.
Fix $X\in\R$ with
$X>||\bC\e||_{\infty,\Om}$ and $X^{\mi1}<\epsilon$.
Define $\sB\subset \Psymn$ by
\[
 \sB:= \{\bD\in\Psymn: X^{\mi2}<\det\bD,\ |\bD|<X\}.
\]
\n Let $\bv\in\AD$  satisfy \eqref{eqn:small+e-t-new-01}
for some $\altd>0$ to be determined.
Then \eqref{eqn:sPKS},  \eqref{eqn:ET-A}, and
Lemma~\ref{lem:taylor-III} with
$\bV=\bC_\bv$, $\bU=\bC\e$, and
$\bE=\bC_\bv-\bC\e$ yield a
constant $c=c(\sB)>0$ such that, for $a.e.~\bx\in\Om$,
\be\label{eqn:sigma-Taylor-1-lem}
\sigma(\bC_\bv)\ge\sigma(\bC\e)
+\tfrac12 \bE:\bK(\bC\e)
+\tfrac18\bE:\CC(\bC\e)[\bE]
-c|\bE|^3.
\ee
\n If we now integrate \eqref{eqn:sigma-Taylor-1-lem}
over $\Om$  and make use of \eqref{eqn:CC-pos-def} we find that
\be\label{eqn:sigma-Taylor-2-lem}
\int_\Omega \sigma(\bC_\bv)\,\dd\bx
\ge
\int_\Omega \sigma(\bC\e)\,\dd\bx
+
\tfrac12\int_\Omega\bE:\bK(\bC\e)\,\dd\bx
+
2k\int_\Omega |\bE|^2\dd\bx
-
c\int_\Omega|\bE|^3\,\dd\bx.
\ee


We next consider the term $\bE:\bK(\bC\e)$.  We make use
of an observation in \cite{GNRT17} (see, also, \cite{Sp82})
to write
\be\label{eqn:I-GNRT-lem}
\bE=\bC_\bv-\bC\e = (\grad\bu\e)^\rmT\bH
+ \bH^\rmT\grad\bu\e+\bH^\rmT\bH,
\qquad
\bH:=\grad\bv-\grad\bu\e.
\ee
\n Therefore,  \eqref{eqn:fPKS},  \eqref{eqn:I-GNRT-lem},
and the symmetry of $\bK$ gives us
\[
\begin{aligned}
\bE:\bK(\bC\e)&= 2\bH:\big[\grad\bu\e\bK(\bC\e)\big]
+ \bH^\rmT\bH:\bK(\bC\e)\\
&= 2\bH:\bS(\grad\bu\e)
+ \bH^\rmT\bH:\bK(\bC\e)
\end{aligned}
\]
\n and consequently
\be\label{eqn:B-dot-D-sigma-lem}
\begin{aligned}
\int_\Om\bE:\bK(\bC\e)\,\dd\bx
&=
2\int_\Om\bH:\bS(\grad\bu\e)\,\dd\bx
+ \int_\Om\bH^\rmT\bH:\bK(\bC\e)\,\dd\bx.
\end{aligned}
\ee
\n We then combine \eqref{eqn:sigma-Taylor-2-lem} and
\eqref{eqn:B-dot-D-sigma-lem} and make use of the
identity \eqref{eqn:Identity-ES} (which is a consequence of
the equilibrium equations \eqref{eqn:EE})
to conclude that
\be\label{eqn:E-sigma-bound-2-lem}
\E(\bv)\ge \E(\bu\e)
+2k \int_\Omega|\bE|^2\,\dd\bx
- c\int_\Omega|\bE|^3\,\dd\bx
+\tfrac12 \int_\Om\bH^\rmT\bH:\bK(\bC\e)\,\dd\bx.
\ee

Next, inequality \eqref{eqn:RH} (with $p=2$ and $q=3$) in
Proposition~\ref{thm:main-2} yields a constant $J>0$ such that,
for the given
$\bu\e$ and $\bv$ that satisfy \eqref{eqn:small+e-t-new-01} and
every $i,j\in\{1,2,\ldots,n\}$,
\be\label{eqn:quad-cubic}
\altd c J^3\int_\Omega|E_{ij}|^2\,\dd\bx
\ge
c\int_\Omega|E_{ij}|^3\,\dd\bx,
\ee
\n which together with \eqref{eqn:E-sigma-bound-2-lem} yields
the desired result, \eqref{eqn:E-sigma-bound-zero-03},
when $\delta$ is sufficiently small.

Finally, suppose that $\bv=\bv\e\not\equiv\bu\e$ is a
solution of the equilibrium equations.
Then the above
argument with $\bu\e$ replaced by $\bv\e$ and $\bv$ replaced by
$\bu\e$ yields
(see \eqref{eqn:sigma-Taylor-1-lem}--\eqref{eqn:quad-cubic})
the desired inequality, \eqref{eqn:E-sigma-bound-zero-04},
when $\delta$ is sufficiently small \emph{after an additional
observation:}
Lemma~\ref{lem:PSV} together with \eqref{eqn:CC-pos-def}
implies that the constant $k$ in \eqref{eqn:E-sigma-bound-zero-03}
becomes $k/2$ in \eqref{eqn:E-sigma-bound-zero-04}
(see \eqref{eqn:sigma-Taylor-1-lem} and \eqref{eqn:sigma-Taylor-2-lem}).
\end{proof}


\begin{proof}[Proof of Theorem~\ref{thm:new-final-1}]
We again suppress the variable $\bx$ for clarity of exposition.  Let
$\bu\e\in\AD$ be a weak solution of the equilibrium equations that
satisfies \eqref{eqn:CC-pos-def} and \eqref{eqn:K-pos-def-at-ue-in-lem}.
Then, in view of Lemma~\ref{lem:L-posdef} (with $\bL=\bK-2\ku\bI$),
\be\label{eqn:K-ge-tau-u}
\int_\Omega\bK\big(\bx,\bC\e(\bx)\big):\bH^\rmT\bH\,\dd\bx \ge
2\ku\int_\Omega|\bH|^2\,\dd\bx, \qquad \bH:=\grad\bv-\grad\bue.
\ee
\n Let $\delta>0$ be given by Lemma~\ref{lem:main-1} so that
any $\bv\in\AD$ that satisfies \eqref{eqn:small+e-t-new-01}
will also satisfy
\eqref{eqn:E-sigma-bound-zero-03}, that is,
\be\label{eqn:E-sigma-bound-zero-03-again}
\E(\bv)\ge \E(\bu\e) + k\int_\Omega|\bC_\bv-\bC\e|^2\dd\bx
+  \ku\int_\Omega|\bH|^2\,\dd\bx,
\ee
\n where we have made use of
\eqref{eqn:K-ge-tau-u}.


(a).~Clearly, $\ku\ge0$ yields $\E(\bv)\ge \E(\bu\e)$ since $k>0$.
Suppose that $\E(\bv)= \E(\bu\e)$.  Then $\bC_\bv=\bC\e~a.e.$  Note that,
in view of \eqref{eqn:CC-pos-def}$_2$,
\[
|\grad\bue(\bx)|^n\le \|\grad\bue\|^n_{\infty,\Om}
\le\bigg[\frac{\|\grad\bue\|^n_{\infty,\Om}}{\epsilon}\bigg]
\big[\det\grad\bue(\bx)\big] \
\text{ for $a.e.~\bx\in\Om$.}
\]
\n Consequently, Proposition~\ref{prop:Lo13} yields a rotation
$\bR\in\SO(n)$ such that $\grad\bv =\bR \grad\bue~a.e.$
Since $\Om$ is a connected open set,
$\bv=\bR\bu\e +\ba$ for some $\ba\in\R^n$. However, $\bv=\bu\e$
on $\sD$ and so $\bR=\bI$ and $\ba=\mathbf{0}$ since $\sD$ is
relatively open.  This establishes
the theorem under hypothesis (a).

(b).~Suppose now that $\bue\in C^1(\Omc;\R^n)$ and $\ku\ge-k C_M$,
where $C_M$ is given by Proposition~\ref{prop:CM15}.  Then
\eqref{eqn:E-sigma-bound-zero-03-again} together with
Proposition~\ref{prop:CM15} (with $p=q=2$) yields
\[
\E(\bv)\ge \E(\bu\e) +
kC_M\Big(\|\bv-\bu\e\|_{W^{1,2}(\Omega)}\Big)^2
-kC_M \int_\Omega|\grad\bv-\grad\bue|^2\dd\bx.
\]
\n Thus,  $\bv\not\equiv\bu\e$ satisfies $\E(\bv)>\E(\bu\e)$, as claimed.


Now suppose in addition that $\bv=\bv\e$ is a weak solution of the
equilibrium equations.  Then \eqref{eqn:E-sigma-bound-zero-03}
and \eqref{eqn:E-sigma-bound-zero-04} yield
\be\label{eqn:both-1}
0\ge
 3k\int_\Omega|\bC_{\bv\e}-\bC\e|^2\dd\bx
+  \int_\Omega
\Big[\bK\big(\bx,\bC\e(\bx)\big)
+\bK\big(\bx,\bC_{\bv\e}(\bx)\big)\Big]:\bH^\rmT\bH\,\dd\bx.
\ee
\n Next, in view of Lemma~\ref{lem:L-posdef}
(with $\bL=\bK(\bC\e)+\bK(\bC_{\bv\e})-2\kv\bI$)
\[
\int_\Omega
\Big[\bK\big(\bx,\bC\e(\bx)\big)
+\bK\big(\bx,\bC_{\bv\e}(\bx)\big)\Big]:\bH^\rmT\bH\,\dd\bx
\ge
2\kv\int_\Omega|\bH|^2\,\dd\bx,
\]
\n which together with  \eqref{eqn:both-1} gives us
\be\label{eqn:both}
0\ge
 3k\int_\Omega|\bC_{\bv\e}-\bC\e|^2\dd\bx
+2\kv \int_\Omega|\grad\bv\e-\grad\bue|^2\dd\bx.
\ee


(i).~If $\kv\ge0$, then $k>0$ yields $\bC_{\bv\e}=\bC\e~a.e.$ and
the same argument used to prove (a) now yields $\bv\e=\bue$.  Thus,
$\bv\not\equiv\bu\e$ cannot satisfy the equilibrium equations.

(ii).~Suppose now that $2\kv\ge-3kC_M$ and at least one of
$\bue$ and $\bv$  is contained in  $C^1(\Omc;\R^n)$.
Then \eqref{eqn:both} together with
Proposition~\ref{prop:CM15} (with $p=q=2$) now yields
\[
0\ge
 3k C_M\Big(\|\bv\e-\bu\e\|_{W^{1,2}(\Omega)}\Big)^2
-3kC_M \int_\Omega|\grad\bv\e-\grad\bue|^2\dd\bx.
\]
\n Therefore,  $\bv\e=\bue$ and hence
$\bv\not\equiv\bu\e$ cannot satisfy the equilibrium equations.
\end{proof}


\subsection{Deformations with Small Strain}\label{sec:Def-SS}

In this subsection we focus on deformations $\bu\in\AD$ whose
nonlinear \emph{Green-St.~Venant strain tensor}
\be\label{eqn:GSVST}
\bE_\bu(\bx):= \tfrac12\big[\bC_\bu(\bx)-\bI\big]
\ee
\n is sufficiently small.  Given one equilibrium solution $\bue$ whose
strain tensor $\bE\e$ is uniformly and sufficiently small we apply
Theorem~\ref{thm:new-final-1} to show that there is a
$\BMO\cap\,L^1$ neighborhood in strain space where there are no
other solutions of the equilibrium equations.


\begin{corollary}\label{cor:small-strain}  Let
$\sigma$ satisfy (1)--(3) of
Hypothesis~\ref{def:W}.   Suppose that, for some $k>0$,
\[
\bB:\CC(\bx,\bI)[\bB]\ge 32k |\bB|^2,
\]
\n for every $\bB\in\Symn$ and $a.e.~\bx\in\Omega$.
Assume further that $\bu\e \in \AD$ is a weak solution of the
equilibrium equations that satisfies, for some $\ku\in\R$,
 some $\epsilon\in(0,1)$,  every $\ba\in\R^n$,
and almost every $\bx\in\Om$,
\be\label{eqn:K-pd+det>0+Ce-small-again}
\begin{gathered}
\ba\cdot\bK\big(\bx,\bC\e(\bx)\big)\ba\ge 2\ku|\ba|^2,
\qquad \det\grad\bu\e(\bx)> \epsilon,  \qquad
\big\|\bC\e-\bI\big\|_{\infty,\Omega} < \omega\oo,
\end{gathered}
\ee
\n where $\bC\e=\bC_{\bu\e}:=(\grad\bu\e)^\rmT\grad\bu\e$,
$\omega\oo$ is the constant determined in
Lemma~\ref{lem:temp-re-D2sigma-moved} (with $\bC\oo=\bI$),
and either
\begin{enumerate}
\item[(a)] $\ku\ge0$; or
\item[(b)] $\bue\in C^1(\Omc;\R^n)$ and $\ku\ge-k C_M$,
where $C_M$ is given by Proposition~\ref{prop:CM15}.
\end{enumerate}
\n Fix $X\in\R$ with
$X>||\bC\e||_{\infty,\Om}$ and $X^{\mi1}<\epsilon$ and suppose that
$\bv\in\AD$  satisfies
\[
||\bC_\bv||_{\infty,\Om} <X,
\qquad  \det\grad\bv>X^{\mi1}~a.e.,
\]
\n with
$\bC_\bv:=(\grad\bv)^\rmT\grad\bv$.
Then there exists a
$\altd=\altd(X)>0$ such that if $\bu\e$ and $\bv$
satisfy
\be\label{eqn:small-Ce-and-Cv}
\begin{gathered}
\tsn{\bC\e}_{\BMO(\Omega)}  +
\Big|\dashint_\Omega \big[\bC\e -\bI\big]\,\dd\bx\Big|<\altd,
\\[2pt]
\tsn{\bC_\bv}_{\BMO(\Omega)}  +
\Big|\dashint_\Omega \big[\bC_\bv -\bI\big]\,\dd\bx\Big|<\altd,
\end{gathered}
\ee
\n or, merely,
\be\label{eqn:Cv-Ce-small}
\tsn{\bC_\bv - \bC\e}_{\BMO(\Omega)}  +
\Big|\dashint_\Omega \big[\bC_\bv -\bC\e\big]\,\dd\bx\Big|<2\altd,
\ee
\n then $\bv$ will have strictly greater energy that $\bue$.
Moreover, if
$\bue$ and $\bv$ also satisfy, for some $\kv\in\R$,
every $\ba\in\R^n$, and almost every $\bx\in\Om$,
\[
\ba\cdot\Big[\bK\big(\bx,\bC\e(\bx)\big)+
\bK\big(\bx,\bC_{\bv}(\bx)\big)\Big]\ba\ge 2\kv |\ba|^2
\]
\n with either
\begin{enumerate}
\item[(i)] $\kv\ge0$; or
\item[(ii)] $2\kv\ge-3kC_M$ and at least one of
$\bue$ and $\bv$  is contained in  $C^1(\Omc;\R^n)$,
\end{enumerate}
\n then $\bv$ cannot be a weak solution of the equilibrium equations.
In particular if, for $a.e.~\bx\in\Om$,
$\bK(\bx,\bC\e(\bx))$ and $\bK(\bx,\bC_{\bv}(\bx))$
are positive semidefinite, then $\bv$ cannot be a
weak solution of the equilibrium equations.
\end{corollary}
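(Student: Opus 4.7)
The plan is to deduce Corollary~\ref{cor:small-strain} directly from Theorem~\ref{thm:new-final-1}. The three things to check are: the uniform positivity of $\CC$ at $\bC\e$, the translation of the two alternative $\BMO$-smallness hypotheses into the single form required by the theorem, and the preservation of the remaining hypotheses on $\bK$. The smallness of $\bC\e-\bI$ in $L^\infty$ is used only for the first of these.

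To verify (\ref{eqn:CC-pos-def}), I would observe that $\bB:\CC(\bx,\bI)[\bB]\ge 32k|\bB|^2$ matches the hypothesis of Lemma~\ref{lem:temp-re-D2sigma-moved} with $\bC\oo=\bI$ and $\beta=16k$. The lemma supplies an $\omega\oo>0$ such that $|\bC-\bI|<\omega\oo$ forces $\bM:\CC(\bx,\bC)[\bM]\ge 16k|\bM|^2$. Since (\ref{eqn:K-pd+det>0+Ce-small-again})$_3$ gives $\|\bC\e-\bI\|_{\infty,\Om}<\omega\oo$, setting $\bC=\bC\e(\bx)$ pointwise and integrating over $\Om$ yields (\ref{eqn:CC-pos-def})$_1$ with the same constant $k$ as in Theorem~\ref{thm:new-final-1}. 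The determinant bound (\ref{eqn:CC-pos-def})$_2$ and the lower bound (\ref{eqn:K-pos-def-at-ue-in-lem}) on the second Piola--Kirchhoff stress are already assumed verbatim in (\ref{eqn:K-pd+det>0+Ce-small-again}).

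Let $\delta^*=\delta^*(X)>0$ denote the constant produced by Theorem~\ref{thm:new-final-1} for the given $X$, $k$, $\ku$, and $\epsilon$, and define the corollary's $\altd:=\delta^*/2$. The $L^\infty$ and Jacobian bounds on $\bv$ appearing in (\ref{eqn:small+e-t-new-01})$_2$ are exactly the bounds already assumed. For the strain smallness in (\ref{eqn:small+e-t-new-01})$_1$: if (\ref{eqn:Cv-Ce-small}) holds, then the quantity $\tsn{\bC_\bv-\bC\e}_{\BMO(\Om)}+|\langle\bC_\bv-\bC\e\rangle_\Om|$ is already strictly less than $2\altd=\delta^*$. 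If instead (\ref{eqn:small-Ce-and-Cv}) holds, then using $\tsn{\bI}_{\BMO(\Om)}=0$ and the triangle inequality,
\[
\tsn{\bC_\bv-\bC\e}_{\BMO(\Om)}+\Big|\dashint_\Om(\bC_\bv-\bC\e)\,\dd\bx\Big|
\le \Big(\tsn{\bC_\bv}_{\BMO(\Om)}+\big|\langle\bC_\bv-\bI\rangle_\Om\big|\Big)
+\Big(\tsn{\bC\e}_{\BMO(\Om)}+\big|\langle\bC\e-\bI\rangle_\Om\big|\Big)<2\altd,
\]
so (\ref{eqn:small+e-t-new-01})$_1$ again holds.

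Finally, the case dichotomies (a)/(b) for $\ku$, the additional hypothesis on $\bK(\bC\e)+\bK(\bC_\bv)$, and the dichotomies (i)/(ii) for $\kv$ are stated identically in the corollary and in Theorem~\ref{thm:new-final-1}, so the conclusions transfer unchanged. The closing observation that pointwise positive semidefiniteness of $\bK(\bx,\bC\e(\bx))$ and $\bK(\bx,\bC_\bv(\bx))$ is the $\kv=0$ instance of case (i) is immediate. I do not anticipate a genuine obstacle: the argument is constant-chasing, with the only minor point being the absorption of the factor of $2$ in (\ref{eqn:Cv-Ce-small}) by defining $\altd:=\delta^*/2$, and the one-step use of Lemma~\ref{lem:temp-re-D2sigma-moved} to transport $\CC$-positivity from $\bI$ to $\bC\e$.
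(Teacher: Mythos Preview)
Your proposal is correct and follows essentially the same route as the paper: reduce to Theorem~\ref{thm:new-final-1} by using Lemma~\ref{lem:temp-re-D2sigma-moved} (with $\bC\oo=\bI$, $\beta=16k$) together with \eqref{eqn:K-pd+det>0+Ce-small-again}$_3$ to obtain \eqref{eqn:CC-pos-def}, and the triangle inequality to pass from \eqref{eqn:small-Ce-and-Cv} to \eqref{eqn:Cv-Ce-small}. Your explicit choice $\altd:=\delta^*/2$ makes precise the factor-of-two bookkeeping that the paper leaves implicit.
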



\begin{remark}\label{rem:BMO-constant-2} 1.~A simple computation shows
that, for any $\bH\in\Mn$,
\[
\tsn{\bC_\bv-\bH}_{\BMO(\Omega)}=\tsn{\bC_\bv}_{\BMO(\Omega)}.
\]
\n Thus, e.g., \eqref{eqn:small-Ce-and-Cv}$_2$ is the assumption
that  $\bC_\bv$
is close to the identity in $\BMO\cap\,L^1$.
2.~Note that Corollary~\ref{cor:small-strain}
does not require a stress-free reference configuration.
3.~Although hypothesis \eqref{eqn:K-pd+det>0+Ce-small-again}$_3$
forces $\bC\e$ to lie in an $L^\infty$-neighborhood of $\bI$, results
of \cite{FJM02} show that $\grad\bue$ then lies in a
$\BMO\cap\,L^1$-neighborhood of some rotation.
More precisely\footnote{This inequality follows directly from (4.3),
(4.4), and (5.14) in \cite{SS19}.}
there is a constant $D=D(\Om)$ such that for any $\bu\in\AD$
there exists a rotation $\bQ_\bu\in\SO(n)$ such that
\[
\tsn{\grad\bu}_{\BMO(\Om)}
+ \Big|\dashint_\Om \big[\grad\bu-\bQ_\bu\big]\,\dd\bx\Big|
\le
D\big\|\bC_\bu-\bI\big\|_{\infty,\Om}.
\]
\end{remark}


\begin{remark}\label{rem:lit-small}  Uniqueness of equilibrium in a
$\BMO$-neighborhood of a stress-free reference configuration was obtained
by John~\cite{Jo72} for the pure-displacement problem
(see also \cite{SS18}).  That result was recently extended to the
mixed problem in \cite{SS19}.  The object that is small in $\BMO$ in
these papers is the deformation gradient $\grad\bu$, rather than the strain
$\bC_\bu$.  Thus, the neighborhood in which there are no other solutions
is potentially larger in Corollary~\ref{cor:small-strain} than
in prior results.  However, our result requires the additional assumption
that each equilibria experience either tension or, at least,
 compressions that are sufficiently small.
\end{remark}


\begin{proof}[Proof of Corollary~\ref{cor:small-strain}]  We first note
that the triangle inequality together with \eqref{eqn:small-Ce-and-Cv}
yields \eqref{eqn:Cv-Ce-small}.  Thus we will
assume that $\bu\e$ and $\bv$ satisfy \eqref{eqn:Cv-Ce-small}.
We next observe that Lemma~\ref{lem:temp-re-D2sigma-moved}
yields an $\omega\oo>0$ such that any $\bE\in\Psymn$
with  $|\bE-\bI|<\omega\oo$ will satisfy
\be\label{eqn:pos-DD-sigma-again-x}
\bB:\CC(\bx,\bE)[\bB]\ge 16k|\bB|^2,
\ee
\n for all $\bB\in\Symn$ and $a.e.~\bx\in\Om$. Consequently,
\eqref{eqn:K-pd+det>0+Ce-small-again}$_3$
(together with \eqref{eqn:pos-DD-sigma-again-x})
yields
\[
\int_\Omega \bD(\bx):\CC\big(\bx,\bC\e(\bx)\big)
\big[\bD(\bx)\big]\,\dd\bx
\ge
16k\int_\Omega |\bD(\bx)|^2\dd\bx,
\]
\n for all $\bD\in L^2(\Om;\Symn)$.
Finally, we see that the hypotheses of
Theorem~\ref{thm:new-final-1} are satisfied, which then implies
the desired results.
\end{proof}


\section{Reference Configurations at Equilibrium}\label{sec:RCatE}

We here note that the statement of Theorem~\ref{thm:new-final-1}
simplifies when the
body in its reference configuration is itself at equilibrium.  Thus, we
assume that $\bue=\id\in C^1(\Omc;\R^n)$, i.e.,
\[
\bue(\bx)=\id(\bx) := \bx \ \text{ for }  \bx\in\Om,
\]
\n  where $\Om\subset\R^n$ is a Lipschitz domain.
Clearly, we also require that $\bd=\id$ on
$\sD$. However,
\emph{we do} \textbf{not} \emph{require that this
reference configuration be stress free.}

\begin{remark}\label{rem:not-equivalent} The above assumption
is akin to assuming that one is given
a body, $\overline\sB\subset\R^n$, and a mapping
$\bu:\overline\sB\to\R^n$ that is a solution of the equilibrium
equations and for which the deformed body
is a Lipschitz (or John) domain.  However, without further
assumptions on $\bu$ the two approaches are not equivalent.
In particular, $\bu(\partial\sB)$ need not be equal to
$\partial\Om$.
\end{remark}


\begin{theorem}[Theorem~\ref{thm:new-final-1}
for a Reference Configuration at Equilibrium]\label{thm:I-1} Let
$\sigma:\Omc\times\Psymn\to[0,\infty)$ satisfy (1)--(3) of
Hypothesis~\ref{def:W}.
 Suppose that $\bu\e=\id$
is a weak solution of the equilibrium equations, \eqref{eqn:EE}, that
satisfies, for some $k>0$,
every $\bD\in L^2(\Om;\Symn)$, every $\ba\in\R^n$,
and almost every $\bx\in\Om$,
\[
\int_\Omega \bD(\bx):\CC\big(\bx,\bI\big)
\big[\bD(\bx)\big]\,\dd\bx
\ge
16k\int_\Omega |\bD(\bx)|^2\dd\bx,\qquad
\ba\cdot\bK(\bx,\bI)\ba\ge -2kC_M |\ba|^2.
\]
\n Fix $X>\sqrt{n}$. Then there
exists a $\altd=\altd(X)>0$ such that any $\bv\in\AD$ that
satisfies $\bv\ne\id$,
\[
\tsn{\bC_\bv}_{\BMO(\Omega)}
+
\Big|\dashint_\Omega \big[\bC_\bv-\bI\big]\,\dd\bx\Big|<\altd, \qquad
\|\bC_\bv\|_{\infty,\Om} <X,
\qquad
  \det\grad\bv>X^{\mi1}~a.e.,
\]
\n with $\bC_\bv:=(\grad\bv)^\rmT\grad\bv$,
will have strictly greater energy
than  $\id$.   Moreover, if in addition,
for almost every $\bx\in\Om$ and every $\ba\in\R^n$,
\[
\ba\cdot\Big[\bK\big(\bx,\bI\big)+
\bK\big(\bx,\bC_{\bv}(\bx)\big)\Big]\ba\ge -3kC_M |\ba|^2
\]
\n then $\bv$ cannot be a weak solution of the equations of equilibrium.
In particular if, for $a.e.~\bx\in\Om$,
$\bK(\bx,\bI)$ and $\bK(\bx,\bC_{\bv}(\bx))$
are positive semidefinite, then $\bv$ cannot be a
weak solution of the equilibrium equations.
\end{theorem}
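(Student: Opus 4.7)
The plan is to derive this theorem directly from Theorem~\ref{thm:new-final-1} by taking $\bue = \id$ and matching the hypotheses line by line; no genuinely new argument is required. Setting $\bue(\bx) = \bx$ gives $\grad\bue \equiv \bI$, hence $\bC\e \equiv \bI$, so $\|\bC\e\|_{\infty,\Om} = \sqrt{n}$ and $\det\grad\bue \equiv 1$ a.e.~on $\Om$. Because we assume $X > \sqrt{n}$, any choice of $\epsilon \in (X^{-1}, 1)$ (a nonempty interval) supplies a constant that simultaneously fulfils $X > \|\bC\e\|_{\infty,\Om}$, $X^{-1} < \epsilon$, and $\det\grad\bue > \epsilon$, thus accommodating all pointwise bounds required by Theorem~\ref{thm:new-final-1} and \eqref{eqn:small+e-t-new-01}.

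Next I would verify the two structural hypotheses of Theorem~\ref{thm:new-final-1}. The integral inequality $\int_\Om \bD:\CC(\bx,\bI)[\bD]\,\dd\bx \ge 16k\int_\Om|\bD|^2\,\dd\bx$ is exactly \eqref{eqn:CC-pos-def}$_1$ with $\bC\e = \bI$ substituted. The stress lower bound $\ba\cdot\bK(\bx,\bI)\ba \ge -2kC_M |\ba|^2$ is \eqref{eqn:K-pos-def-at-ue-in-lem} with $\ku = -kC_M$, and since $\id \in C^1(\Omc;\R^n)$ trivially, alternative (b) of Theorem~\ref{thm:new-final-1} applies. Therefore the $\altd = \altd(X) > 0$ produced by that theorem has the property that any $\bv \in \AD$ satisfying the stated smallness, pointwise, and determinant bounds with $\bv \ne \id$ obeys $\E(\bv) > \E(\id)$.

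For the second assertion I would match the combined-stress hypothesis. The given inequality $\ba\cdot[\bK(\bx,\bI)+\bK(\bx,\bC_\bv(\bx))]\ba \ge -3kC_M|\ba|^2$ is precisely the assumption of Theorem~\ref{thm:new-final-1} with $2\kv = -3kC_M$; together with the automatic $C^1$-regularity of $\bue = \id$, alternative (ii) of that theorem then rules out $\bv$ as a weak equilibrium solution. The concluding sentence concerning positive-semidefinite $\bK(\bx,\bI)$ and $\bK(\bx,\bC_\bv(\bx))$ is simply the case $\kv = 0$ handled by alternative (i).

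The entire argument is thus a bookkeeping translation, and I do not expect any substantive obstacle: the only places that warrant care are the quantitative identifications $\ku = -kC_M$ and $2\kv = -3kC_M$, and the observation that $(X^{-1},1)$ is nonempty under the assumption $X > \sqrt{n}$. Once these are in place, every hypothesis of Theorem~\ref{thm:new-final-1} is verified and the conclusions transfer verbatim.
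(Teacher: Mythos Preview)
Your proposal is correct and matches the paper's approach: the paper presents Theorem~\ref{thm:I-1} as a direct specialization of Theorem~\ref{thm:new-final-1} with $\bue=\id$ and does not supply a separate proof, so your line-by-line verification of the hypotheses (in particular the identifications $\ku=-kC_M$, $2\kv=-3kC_M$, and the choice of $\epsilon\in(X^{-1},1)$) is exactly the bookkeeping the paper leaves to the reader.
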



\begin{remark}\label{rem:better}  (1).~A slightly better result can
be obtained by replacing Proposition~\ref{prop:CM15} with the result
it is based upon in \cite{FJM02}. (2).~We note that statements of
prior results of ours from \cite{SS19} as well as a prior result of
J.~Sivaloganathan and one of the current authors from \cite{SS18}
also simplify in the special case when $\bue=\id$.
\end{remark}


\appendix

\section{Taylor's Theorem}\label{sec:Taylor}

 The following result is a consequence of Taylor's theorem
(see, e.g., \cite[Section~4.6]{Ze86} and \cite[Appendix~A]{SS19}).


\begin{lemma}\label{lem:taylor-III} Let
$\sigma:\overline{\Omega}\times\Psymn\to\R$ be as given in
(1)--(3) of Hypothesis~\ref{def:W}.
Suppose that $\sB\subset\Psymn$ is a nonempty,
bounded, open set that satisfies $\overline{\sB}\subset\Psymn$.
Then there exists constants $c=c(\sB)>0$ and  $\widehat{c}=\widehat{c}(\sB)>0$
such that, for every $\bU,\bV\in\overline{\sB}$, $\bL\in\Symn$,
and almost every $\bx\in\Omega$,
\[
\begin{gathered}
\sigma(\bx,\bV)\ge \sigma(\bx,\bU) + \bE:\DD\sigma(\bx,\bU)
+\tfrac12\bE:\DD^2 \sigma(\bx,\bU)[\bE] -c|\bE|^3,\\[4pt]
\bL:\DD^2 \sigma(\bx,\bV)[\bL]
\ge
\bL:\DD^2 \sigma(\bx,\bU)[\bL] -\widehat{c}|\bV-\bU||\bL|^2,
\end{gathered}
\]
\n where $\bE:=\bV-\bU$.
\end{lemma}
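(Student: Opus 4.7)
The plan is to apply Taylor's theorem to the scalar function $t\mapsto \sigma(\bx,\bU+t(\bV-\bU))$ on $[0,1]$ and to control the third derivative uniformly using the boundedness assumption in Hypothesis~\ref{def:W}(3). The key preparatory step is a convexity/compactness observation: the set $\Psymn$ is convex, since a convex combination of positive-definite symmetric matrices is positive definite. Thus for any $\bU,\bV\in\overline{\sB}\subset\Psymn$, the whole segment $[\bU,\bV]$ lies in $\Psymn$. Moreover, the convex hull $\sK:=\mathrm{conv}(\overline{\sB})$ is bounded in the finite-dimensional space $\Symn$, and its closure $\overline{\sK}$ is therefore a compact subset of $\Psymn$ (it is contained in $\Psymn$ by convexity of $\Psymn$). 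Hypothesis~\ref{def:W}(3) then provides a finite constant
\[
M:=\esssup_{\bx\in\Om,\,\bC\in\overline{\sK}}\bigl|\DD^3\sigma(\bx,\bC)\bigr|,
\]
and an analogous bound on $\DD^2\sigma$, each depending only on $\sB$.

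For the first inequality, I would fix $\bU,\bV\in\overline{\sB}$, set $\bE:=\bV-\bU$, and define $f(t):=\sigma(\bx,\bU+t\bE)$ for $t\in[0,1]$. Since $\sigma(\bx,\cdot)\in C^3(\Psymn)$ by Hypothesis~\ref{def:W}(1) and the segment stays in $\Psymn$, $f\in C^3([0,1])$ with
\[
f'(t)=\bE:\DD\sigma(\bx,\bU+t\bE),\qquad f''(t)=\bE:\DD^2\sigma(\bx,\bU+t\bE)[\bE],
\]
and an analogous formula for $f'''(t)$ involving $\DD^3\sigma$. The Lagrange remainder form of Taylor's theorem gives
\[
f(1)=f(0)+f'(0)+\tfrac12 f''(0)+\tfrac16 f'''(\theta)
\]
for some $\theta\in(0,1)$, and bounding $|f'''(\theta)|\le M|\bE|^3$ yields the first inequality with $c:=M/6$.

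For the second inequality, the observation is that $\bC\mapsto\DD^2\sigma(\bx,\bC)$ is Lipschitz on $\overline{\sK}$ with Lipschitz constant at most $M$: applying the mean value theorem to $t\mapsto \DD^2\sigma(\bx,\bU+t(\bV-\bU))$ (again using that the segment lies in $\Psymn$) gives
\[
\bigl|\DD^2\sigma(\bx,\bV)-\DD^2\sigma(\bx,\bU)\bigr|\le M\,|\bV-\bU|.
\]
Then, for any $\bL\in\Symn$, Cauchy--Schwarz in the inner product $\bX:\bY$ yields
\[
\bL:\DD^2\sigma(\bx,\bV)[\bL]-\bL:\DD^2\sigma(\bx,\bU)[\bL]\ge -M\,|\bV-\bU|\,|\bL|^2,
\]
which is the second inequality with $\widehat c:=M$.

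The only subtle point, and hence the main obstacle worth naming, is the reduction to a common compact subset of $\Psymn$ on which Hypothesis~\ref{def:W}(3) can be invoked; once convexity of $\Psymn$ is used to guarantee that all line segments $[\bU,\bV]$ (and hence $\overline{\mathrm{conv}(\overline{\sB})}$) lie in $\Psymn$, the rest is a direct application of one-dimensional Taylor expansion and the mean value inequality. I would spell out the Taylor formula in the multilinear matrix notation carefully, since the inner products $\bE:\DD^k\sigma[\bE,\dots,\bE]$ need to be matched with the scalar derivatives of $f$, but no essential difficulty arises beyond routine chain-rule bookkeeping.
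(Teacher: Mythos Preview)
Your proposal is correct and follows exactly the approach the paper intends: the paper does not give a detailed argument but simply records the lemma as ``a consequence of Taylor's theorem'' with references, and your proof---convexity of $\Psymn$ to keep segments inside a fixed compact set, the uniform bound on $\DD^3\sigma$ from Hypothesis~\ref{def:W}(3), one-variable Taylor expansion with Lagrange remainder, and the mean-value inequality for the second estimate---is precisely the standard fleshing-out of that citation. The only cosmetic remark is that $\mathrm{conv}(\overline{\sB})$ is already compact in finite dimensions, so taking its closure is unnecessary.
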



\end{document}